\documentclass[12pt, leqno]{article}
\usepackage{fullpage} 
\usepackage{amsmath}
\usepackage{amssymb}
\usepackage{undertilde}
\usepackage{stmaryrd}
\usepackage{bbding}
\usepackage{enumerate}
\usepackage{graphicx}
\usepackage{xypic}
\xyoption{curve}
\usepackage{multicol}
\xyoption{color}
\usepackage{setspace}
\usepackage{boxedminipage}
\usepackage{nicefrac}
\usepackage{multicol}
\usepackage{amsthm}
  \usepackage[pdftex,
  bookmarks = false,
  pdfstartview = FitBH,
  linktocpage = true,
  pagebackref = true,
  pdfhighlight = /O,
  colorlinks=true,
  linkcolor=blue,
  citecolor=blue,
  filecolor = blue,
  urlcolor = blue,
  menucolor = blue,
]{hyperref}

\newtheorem{thm}{Theorem}[section]
\newtheorem{prop}[thm]{Proposition}
\newtheorem{defn}[thm]{Definition}

\numberwithin{equation}{section}

\title{{F}ragments of Frege's \emph{{G}rundgesetze} and {G}\"odel's {C}onstructible {U}niverse
}
\author{Sean Walsh}

\begin{document}

\maketitle

\begin{abstract}
Frege's \emph{Grundgesetze} was one of the 19th century forerunners to contemporary set theory which was plagued by the Russell paradox. In recent years, it has been shown that subsystems of the \emph{Grundgesetze} formed by restricting the comprehension schema are consistent. One aim of this paper is to ascertain how much set theory can be developed within these consistent fragments of the \emph{Grundgesetze}, and our main theorem (Theorem~\ref{thm:main}) shows that there is a model of a fragment of the \emph{Grundgesetze} which defines a model of all the axioms of Zermelo-Fraenkel set theory with the exception of the power set axiom. The proof of this result appeals to G\"odel's constructible universe of sets and to Kripke and Platek's idea of the projectum, as well as to a weak version of uniformization (which does not involve knowledge of Jensen's fine structure theory). The axioms of the \emph{Grundgesetze} are examples of \emph{abstraction~principles}, and the other primary aim of this paper is to articulate a sufficient condition for the consistency of abstraction principles with limited amounts of comprehension (Theorem~\ref{thm:jointconsistency}). As an application, we resolve an analogue of the joint~consistency problem in the predicative setting.
\end{abstract}

\section{Introduction}

There has been a recent renewed interest in the technical facets of Frege's \emph{Grundgesetze} (\cite{Burgess2005},~\cite{Cook2007aa}) paralleling the long-standing interest in Frege's philosophy of mathematics and logic (\cite{Dummett1991a}, \cite{Beany2005aa}). This interest has been engendered by the consistency proofs, due to Parsons~\cite{Parsons1987a}, Heck~\cite{Heck1996}, and Ferreira-Wehmeier~\cite{Ferreira2002aa}, of this system with limited amounts of comprehension. The broader intellectual interest in Frege's \emph{Grundgesetze} stems in part from the two related ways in which it was a predecessor of contemporary set theory: first, the system was originally designed to be able to reconstruct much of ordinary mathematics, and second it comes equipped with the resources needed to define a membership relation. It is thus natural to ask how much set theory can be consistently developed within these fragments of the \emph{Grundgesetze}. Our main theorem (Theorem~\ref{thm:main}) shows it is possible within some models of these fragments to recover all the axioms of Zermelo-Fraenkel set theory with the exception of the power set axiom. To make this precise, one needs to carefully set out the primitives of the consistent fragments of the \emph{Grundgesetze} and indicate what precisely it means to recover a fragment of set theory. This is the primary goal of \S\ref{sec02.1} of the paper.

Following Wright and Hale (\cite{Hale2001}, cf.~\cite{Cook2007aa}), the system of the \emph{Grundgesetze} has been studied in recent decades as a special case of so-called \emph{abstraction principles}. These are principles that postulate lower-order representatives for equivalence relations on higher-order entities. Many of these principles are inconsistent with full comprehension, which intuitively says that every formula determines a concept or higher-order entity. So as with the \emph{Grundgesetze}, the idea has been to look for consistency with respect to the so-called predicative instances of the comprehension schema, in which the presence of higher-order quantifiers within formulas is highly restricted. Of course, while predicativity in connection with the \emph{Grundgesetze} is a fairly new topic, predicativity has a long tradition within mathematical logic, beginning with Poincar\'e, Russell, and Weyl (\cite{Heinzmann1986aa},~\cite{Weyl1918}), and found in our day in the work of Feferman (\cite{Feferman2005ab,Feferman1964aa}) and in~${\tt ACA}_0$ and related systems of Friedman and Simpson's project of reverse mathematics (\cite{Friedman1975aa},~\cite{Simpson2009aa}).

The other chief theorem of this paper (Theorem~\ref{thm:jointconsistency}) shows that an abstraction principle associated to an equivalence relation is consistent with predicative comprehension so long as the equivalence relation is provably an equivalence relation in a limited theory of pure second-order logic and is expressible in the signature of that pure second-order logic. One application of this result is a resolution to the joint consistency problem in the predicative setting. For, in the setting of full comprehension, it has been known for some time that there are abstraction principles which are individually but not jointly consistent. In \S\ref{sec0.1555predab}, we define the notion of an abstraction principle and further contextualize our results within the extant literature on abstraction principles.

The methods used in all these results draw on considerations related to G\"odel's constructible universe of sets. Whereas in the cumulative hierarchy of sets~$V_{\alpha}$, one proceeds by iterating the operation of the powerset into the transfinite, in the constructible hierarchy of sets~$L_{\alpha}$, one proceeds by iterating the operation of taking definable subsets into the transfinite. G\"odel showed that just like the universe of sets~$V=\bigcup_{\alpha} V_{\alpha}$ is a model of the axioms of set theory, so the constructible universe of sets~$L=\bigcup_{\alpha} L_{\alpha}$ is a model of the axioms of set theory, along with a strong form of the axiom of choice according to which the elements of~$L$ are well-ordered by a relation~$<_L$ (cf. \cite{Jech2003}~Chapter 13,~\cite{Kunen1980} Chapter 6,~\cite{Kunen2011aa} II.6, \cite{Devlin1984aa}). 

Our present understanding of the more ``local'' or ``micro'' properties of the constructible sets was furthered by the work of Kripke (\cite{Kripke1964aa}), Platek (\cite{Platek1966aa}) and Jensen (\cite{Jensen1972aa}), in whose results we find the key ideas of the projectum and uniformization. Roughly, a level~$L_{\alpha}$ of the constructible hierarchy satisfies uniformization if whenever it satisfies~$\forall \; x \; \exists \; y \; R(x,y)$ then there is a definable function~$f$ of the same level of complexity as~$R$ which satisfies~$\forall \;x \; R(x,f(x))$. The projectum, on the other hand, is related to the idea that certain initial segments~$L_{\alpha}$ of the constructible universe can be shrunk via a definable injection~$\iota:L_{\alpha}\rightarrow \rho$ to a smaller ordinal~$\rho<\alpha$. The formal definitions of the projectum and uniformization are given in \S\ref{sec02.2}. It bears emphasizing that we only employ a weak version of uniformization which has an elementary proof, and so this paper does not presuppose knowledge of Jensen's fine structure theory (cf. Proposition~\ref{prop:weakuniform}).

It's actually rather natural to think that uniformization and the projectum would be useful in producing models of abstraction principles. On the one hand, given an equivalence relation~$E$ on the set~$P(\rho)\cap L_{\alpha}$, we can conceive of the elements of this set as higher-order entities, and then we can take the lower-order representative in~$\rho$ of an~$E$-equivalence class to be the injection~$\iota$ applied to the~$<_L$-least element of~$E$'s equivalence class. On the other hand, uniformization allows one to secure further instances of the comprehension schema in which there are some controlled occurrences of higher-order quantifiers, in essence because one can use uniformization to choose one particular higher-order entity with which to work. 

This, in any case, is the intuitive idea behind the proof of our theorem on the consistency of abstraction principles (Theorem~\ref{thm:jointconsistency}) which we prove in \S\ref{sec04.5}. However, this does not itself deliver our result on how much set theory one can recover in the consistent fragments of the \emph{Grundgesetze}. For this, we need to additionally show that if we start from a level of the constructible hierarchy which satisfies certain axioms of set theory, and if we perform the construction of a model of the fragment of the \emph{Grundgesetze} in the manner intimated in the above paragraph and made precise in \S\ref{sec04.5}, then we can recover these original constructible sets definably within the model of the fragment of the \emph{Grundgesetze}. The details of this argument are carried out in \S\ref{sec05} where our Main Theorem~\ref{thm:main} is finally established.

This paper is the first in a series of three papers -- the other two being~\cite{Walsh2014ad},~\cite{Walsh2014ae}-- which collectively constitute a sequel to the Basic Law~V components of our paper~\cite{Walsh2012aa}. In that earlier paper~\cite{Walsh2012aa}, we gave a proof of the consistency of Frege's \emph{Grundgesetze} system with limited amounts of comprehension using tools from hyperarithmetic theory. However, we were unable to use these models to ascertain how much Zermelo-Fraenkel set theory could be consistently done in the Fregean setting. The work in this paper explains why this was the case. The key to this was an axiom known as Axiom~Beta (cf. Definition~\ref{defna:axiombeta}), which effectively ensures that the Mostowski Collapse Theorem holds in a structure. As one can see by inspection of the proofs in \S\ref{sec05}, it is being able to invoke this theorem in a model which allows us to obtain finally the Main Theorem~\ref{thm:main}. It turns out that the usual models associated to hyperarithmetic theory simply are not models of Axiom~Beta.

This present paper does not depend on results from our earlier paper~\cite{Walsh2012aa}, nor does it depend on its two thematically-linked companion papers,~\cite{Walsh2014ae}, ~\cite{Walsh2014ad}. In the companion paper~\cite{Walsh2014ae}, we use the constructible hierarchy to develop models of an intensional type theory, roughly analogous to how one can use the cumulative hierarchy to build models of an extensional type theory. This intensional type theory can in turn interpret fragments of the \emph{Grundgesetze} system, and so stands to the predicative \emph{Grundgesetze} system as the stage axioms of Shoenfield~\cite{Shoenfield1961aa, Shoenfield1967aa, Shoenfield1977aa} and Boolos~\cite{Boolos1971} stands to the Zermelo-Fraenkel system. In the other companion paper~\cite{Walsh2014ad}, we examine the deductive strength of the theory consisting of all the predicative abstraction principles whose consistency we establish here. 

\section{The \emph{Grundgesetze} and Its Set Theory}\label{sec02.1}

Basic Law~V is the crucial fifth axiom of Frege's \emph{Grundgesetze} (\cite{Frege1893},~\cite{Frege2013aa}), and it axiomatizes the behavior of a certain type-lowering operator from second-order entities to first-order entities, called the ``extension operator.'' In Frege's type-theory, the second-order entities are called ``concepts'' while the first-order entities are called ``objects,'' so that the extension operator~$\partial$ takes a concept~$X$ and returns an object~$\partial(X)$. (There is no standard notation for the extension operator, and so some authors write~$\S(X)$ in lieu of~$\partial(X)$). Basic Law~V then simply postulates that the extension operator is injective:
\begin{equation}
\mbox{\emph{Basic Law~V:}}\hspace{10mm}~\forall \; X, Y \; (\partial(X) = \partial(Y) \leftrightarrow X=Y)\label{eqn:blv}
\end{equation}
Here the identity of concepts is regarded as extensional in character, so that two concepts~$X,Y$ are said to be identical precisely when they are coextensive, i.e.~$X=Y$ if and only if for all objects~$z$ we have that~$Xz$ if and only if~$Yz$. 

Models of Basic Law~V have the following form:
\begin{equation}\label{eqn:blvmodels}
\mathcal{M}=(M, S_1(M), S_2(M), \ldots, \partial)
\end{equation}
wherein~$M$ is a non-empty set that serves as the interpretation of the objects, and the set~$S_n(M)\subseteq P(M^n)$ serves as the interpretation of the~$n$-ary concepts, and wherein the function~$\partial:S_1(M)\rightarrow M$ is an injection. Further, we assume that in the object-language of the structure from equation~(\ref{eqn:blvmodels}) we have the resources to describe when an~$n$-tuple~$(a_1, \ldots, a_n)$ from~$M^n$ is in an~$n$-ary concept~$R$ from~$S_n(M)$, and we write this in the object-language alternatively as~$R(a_1, \ldots, a_n)$ or~$(a_1, \ldots, a_n)\in R$, and we refer to this relation as the predication relation.

As is well-known, Basic Law~V is inconsistent with the full second-order comprehension schema:
\begin{defn} 
The \emph{Full Comprehension Schema} consists of the all axioms of the form \;$\exists \; R \; \forall \; \overline{a} \; (R\overline{a} \leftrightarrow \varphi(\overline{a}))$, wherein~$\varphi(\overline{x})$ is allowed to be any formula, perhaps with parameters, and~$\overline{x}$ abbreviates~$(x_1, \ldots, x_n)$ and~$R$ is an~$n$-ary concept variable for~$n\geq 1$ that does not appear free in~$\varphi(\overline{x})$.
\label{eqn:unaryfullcomp}
\end{defn}
\noindent In spite of this inconsistency, Parsons and Heck (\cite{Parsons1987a}, \cite{Heck1996}) showed that Basic Law~V is \emph{consistent} with the version of the comprehension schema in which~$\varphi(x)$ contains no second-order quantifiers:
\begin{defn}
The \emph{First-Order Comprehension Schema} consists of all axioms of the form \;$\exists \; R \; \forall \; \overline{a} \; (R\overline{a} \leftrightarrow \varphi(\overline{a}))$, wherein~$\varphi(\overline{x})$ is allowed to be any formula with no second-order quantifiers but perhaps with parameters, and~$\overline{x}$ abbreviates~$(x_1, \ldots, x_n)$ and~$R$ is an~$n$-ary concept variable for~$n\geq 1$ that does not appear free in~$\varphi(\overline{x})$. \label{pred:comp:schema}
\end{defn}
\noindent Ferreira and Wehmeier extended the Parsons-Heck result by showing that there are models~$\mathcal{M}=(M, D(M), D(M^2), \ldots, \partial)$ of Basic Law~V which also model stronger forms of comprehension, namely the \;$\Delta^1_1$-comprehension schema and the~$\Sigma^1_1$-choice schema (\cite{Ferreira2002aa} \S4). These schemata are defined as follows:
\begin{defn} The {\it$\Delta^1_1$-Comprehension Schema} consists of all axioms of the form
\begin{equation}
\forall \; \overline{x} \; (\varphi(\overline{x})\leftrightarrow \psi(\overline{x}))\rightarrow \exists \; R \; \forall \; \overline{a} \; (R\overline{a} \leftrightarrow \varphi(\overline{a}))
\end{equation}
wherein~$\varphi(\overline{x})$ is a~$\Sigma^1_1$-formula and~$\psi(\overline{x})$ is a~$\Pi^1_1$-formula that may contain parameters, and~$\overline{x}$ abbreviates~$(x_1, \ldots, x_n)$, and~$R$ is an~$n$-ary concept variable for~$n\geq 1$ that does not appear free in~$\varphi(\overline{x})$ or~$\psi(\overline{x})$. \label{delta11comp}
\end{defn}
\begin{defn} 
The {\it$\Sigma^1_1$-Choice Schema} consists of all axioms of the form
\begin{equation}\label{eqn:LATE:formchoice}
[\forall \; \overline{x} \; \exists \; R^{\prime} \; \varphi(\overline{x}, R^{\prime})]\rightarrow \exists \; R \; [\forall \; \overline{x} \; \forall \; R^{\prime} \; [(\forall \; \overline{y} \; (R^{\prime}\overline{y} \leftrightarrow R\overline{x}\:\overline{y}))\rightarrow \varphi(\overline{x}, R^{\prime})]]
\end{equation}
wherein the formula~$\varphi(\overline{x},R^{\prime})$ is~$\Sigma^1_1$, perhaps with parameters, and~$\overline{x}$ abbreviates {}$(x_1, \ldots, x_n)$ and~$\overline{y}$ abbreviates~$(y_1, \ldots, y_m)$ and~$R$ is an~$(n+m)$-ary concept variable for~$n,m\geq 1$ that does not appear free in~$\varphi(\overline{x},R^{\prime})$ where~$R^{\prime}$ is an~$m$-ary concept variable.
\label{sigam11choice}
\end{defn}
\noindent Here, as is usual, a~$\Sigma^1_1$-formula (resp.~$\Pi^1_1$-formula) is one which begins with a block of existential quantifiers (resp. universal quantifiers) over~$n$-ary concepts for various~$n\geq 1$ and which contains no further second-order quantifiers. Given this variety of comprehension schemata, it becomes expedient to explicitly distinguish between different formal theories that combine these schemata with the axiom Basic Law~V from equation~(\ref{eqn:blv}). In particular, one defines the following systems  (cf.~\cite{Walsh2012aa} Definition 5 p. 1683):
\begin{defn} The theory~$\tt{ ABL}_0$ is Basic Law~V together with the First-Order Comprehension Schema~(cf. Definition~\ref{pred:comp:schema}). The theory~${\tt \Delta^1_1\mbox{-}BL_0}$ is Basic Law~V together with the~$\Delta^1_1$-Comprehension Schema (cf. Definition~\ref{delta11comp}).  The theory~${\tt \Sigma^1_1\mbox{-}LB_0}$ is Basic Law~V together with the~$\Sigma^1_1$-Choice Schema (cf. Definition~\ref{sigam11choice}) and the First-Order Comprehension Schema~(cf. Definition~\ref{pred:comp:schema}).\label{defn:Sigma11choiceBLV}
\end{defn}
\noindent We opt to designate the subsystem formed with~$\Sigma^1_1$-Choice by inverting the letters ``{\tt BL}'' to ``{\tt LB}'', since this convention saves us from needing to write out the word ``choice'' when referring to a theory, and since it is compatible with the convention in subsystems of second-order arithmetic (\cite{Simpson2009aa}), wherein the~$\Delta^1_1$-comprehension fragment is called~${\tt \Delta^1_1\mbox{-}CA_0}$ and the~$\Sigma^1_1$-choice fragment is called~${\tt \Sigma^1_1\mbox{-}AC_0}$. 

In the companion paper \cite{Walsh2014ad}, we work deductively in theories containing limited amounts of comprehension. In these situations, it will prove expedient to consider an enrichment of the above theories by the addition of certain function symbols. In particular, we assume that for every~$m,n>0$ we have a~$(n+1)$-ary function symbol in the language for the map~$(R, a_1, \ldots, a_n)\mapsto R[a_1, \ldots, a_n]$ from a single~$(n+m)$-ary relation~$R$ and an~$n$-tuple of objects~$(a_1, \ldots, a_n)$ to the~$m$-ary relation
\begin{equation}\label{eqn:iamafunction}
R[a_1, \ldots, a_n]= \{(y_1, \ldots, y_m): R(a_1, \ldots, a_n, y_1, \ldots, y_m)\}
\end{equation}
One benefit of the addition of these symbols is that it allows for a compact formalization of the key clause~(\ref{eqn:LATE:formchoice}) of the $\Sigma^1_1$-choice schema, namely:
\begin{equation}\label{eqn:LATE:formchoice2}
[\forall \; \overline{x} \; \exists \; R^{\prime} \; \varphi(\overline{x}, R^{\prime})]\rightarrow [\exists \; R \; \forall \; \overline{x} \; \varphi(\overline{x}, R[\overline{x}])]
\end{equation}
The addition of these function symbols to the signature impacts the axiom system because we continue to assume that we have~$\Sigma^1_1$-choice and first-order comprehension. In particular, the inclusion of the function symbols~$(R, a_1, \ldots, a_n)\mapsto R[a_1, \ldots, a_n]$ in the signature then adds to the collection of terms of the signature, which in turn adds to the collection of quantifier-free and hence first-order formulas of the signature. 

Let us call this expansion of~${\tt \Sigma^1_1\mbox{-}LB_0}$ the system~${\tt \Sigma^1_1\mbox{-}LB}$, i.e. we drop the ``zero'' subscript; and likewise for the other systems from Definition~\ref{pred:comp:schema}. For ease of future reference, let's explicitly record this in the following definition:
\begin{defn} The theory~$\tt{ ABL}$ is Basic Law~V together with the First-Order Comprehension Schema~(cf. Definition~\ref{pred:comp:schema}) in the signature including the function symbols~$(R, a_1, \ldots, a_n)\mapsto R[a_1, \ldots, a_n]$. The theory~${\tt \Delta^1_1\mbox{-}BL}$ is Basic Law~V together with the~$\Delta^1_1$-Comprehension Schema (cf. Definition~\ref{delta11comp}) in the signature with these function symbols. The theory~${\tt \Sigma^1_1\mbox{-}LB}$ is Basic Law~V together with the~$\Sigma^1_1$-Choice Schema (cf. Definition~\ref{sigam11choice}) and the First-Order Comprehension Schema~(cf. Definition~\ref{pred:comp:schema}) in the signature including these function symbols.\label{defn:Sigma11choiceBLV00}
\end{defn}

In building models of these consistent fragments of Frege's system, one of our chief aims is to understand how much set theory can be thereby recovered. The crucial idea is to define an ersatz membership-relation~$\eta$ in terms of the extension operator and predication:
\begin{equation}\label{eqn:Fregemembership}
a\eta b \Longleftrightarrow \exists \; B \; (\partial(B) =b \; \& \; Ba)
\end{equation}
Since the \emph{extensions} are precisely the objects in the range of the extension operator~$\partial$, we write the collection of extensions as~$\mathrm{rng}(\partial)$. Now it follows from considerations related to the Russell paradox that~$\mathrm{rng}(\partial)$ is not a concept in the presence of~$\Delta^1_1$-comprehension (cf. \cite{Walsh2012aa} Proposition 29 p. 1692). In contrast to~$\mathrm{rng}(\partial)$, the collections~$V=\{x:x=x\}$ and~$\emptyset =\{x: x\neq x\}$ do form concepts since they are first-order definable. The following elementary proposition, provable in~${\tt \Sigma^1_1\mbox{-}LB}$, says that for subconcepts of~$\mathrm{rng}(\partial)$, the~$\eta$-relation restricted to this concept exists as a binary concept:
\begin{prop}\label{iamaveryhelpfulprop} (Existence of Restricted~$\eta$-relation) (${\tt \Sigma^1_1\mbox{-}LB}$)
For every concept~$X\subseteq \mathrm{rng}(\partial)$ there is a binary concept~$R$ such that for all~$a$, we have that~$Xa$ implies~$\partial(R[a])=a$. So for all concepts~$X\subseteq \mathrm{rng}(\partial)$ there is a binary relation~$E_X\subseteq V\times X$ such that~$Xa$ implies:~$E_X(b,a)$ iff~$b\eta a$.
\end{prop}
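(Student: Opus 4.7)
The plan is to combine $\Sigma^1_1$-choice with the injectivity of $\partial$ supplied by Basic Law~V, and then use first-order comprehension in the enriched signature to extract the binary relation $E_X$.

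For the first half, I would unpack the hypothesis $X\subseteq \mathrm{rng}(\partial)$ as: for every object $a$, if $Xa$ then $\exists\; B^{\prime}\;(\partial(B^{\prime}) = a)$. Let $\varphi(a,B^{\prime})$ be the formula $Xa \rightarrow \partial(B^{\prime}) = a$. This is a first-order formula, hence trivially $\Sigma^1_1$. The premise $\forall\; a\; \exists\; B^{\prime}\; \varphi(a,B^{\prime})$ of $\Sigma^1_1$-choice holds: if $Xa$ then $X\subseteq \mathrm{rng}(\partial)$ supplies a witness, and if $\neg Xa$ then any concept (say the empty one, which exists by first-order comprehension) witnesses $\varphi(a,B^{\prime})$ vacuously. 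Applying the compact form~(\ref{eqn:LATE:formchoice2}) of $\Sigma^1_1$-choice in ${\tt \Sigma^1_1\mbox{-}LB}$ yields a binary concept $R$ such that $\forall\; a\; \varphi(a,R[a])$, i.e.\ $Xa \rightarrow \partial(R[a])=a$.

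For the second half, with $R$ in hand I would use first-order comprehension in the enriched signature (which contains the function symbol $R[\cdot]$ as a term-former, so $R(a,b)$ is a first-order atomic context) to form the binary relation $E_X$ defined by $E_X(b,a) \equiv Xa \wedge R(a,b)$. By construction $E_X \subseteq V \times X$. Now fix $a$ with $Xa$. If $E_X(b,a)$, then $R[a]$ holds of $b$ and $\partial(R[a])=a$, so by the definition of $\eta$ in~(\ref{eqn:Fregemembership}) we get $b\eta a$. Conversely, if $b\eta a$, there is some concept $B$ with $\partial(B)=a$ and $Bb$; since $\partial(R[a])=a$ also, the injectivity clause of Basic Law~V together with the extensionality of concepts gives $B=R[a]$, whence $R(a,b)$ and so $E_X(b,a)$.

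There is no real obstacle: the argument is a direct application of $\Sigma^1_1$-choice to the ``witnessing concept'' relation, followed by a first-order definition. The one point that deserves emphasis is precisely why it is essential that we work in ${\tt \Sigma^1_1\mbox{-}LB}$ rather than ${\tt \Sigma^1_1\mbox{-}LB_0}$: the function symbols $(R,a)\mapsto R[a]$ in the enriched signature are what let us repackage the output of $\Sigma^1_1$-choice as a genuine binary concept whose ``slices'' $R[a]$ witness $\partial(R[a])=a$, and also what make the defining formula for $E_X$ count as first-order so that ordinary first-order comprehension applies.
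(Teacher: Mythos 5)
Your proof is correct, and it is exactly the intended argument: the paper states this proposition without proof as an ``elementary'' consequence of ${\tt \Sigma^1_1\mbox{-}LB}$, and the evident route is precisely yours --- apply the compact form~(\ref{eqn:LATE:formchoice2}) of $\Sigma^1_1$-choice to the first-order witnessing formula $Xa\rightarrow \partial(B^{\prime})=a$, then use Basic Law~V's injectivity to identify $R[a]$ with any concept $B$ satisfying $\partial(B)=a$, and first-order comprehension in the enriched signature to form $E_X$. Your closing remark about why the function symbols $(R,a)\mapsto R[a]$ matter is also on target; no gaps.
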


It will also be helpful in what follows to have some fixed notation for subset and successor. So similar to equation~(\ref{eqn:Fregemembership}) we define the associated Fregean subset relation~$\subseteq_{\eta}$ as follows:
\begin{equation} \label{eqn:defn:subsetfrege}
a\subseteq_{\eta} b \Longleftrightarrow \forall \; c \; (c\eta a \rightarrow c\eta b)
\end{equation}
However, note that if~$a$ is \emph{not} an extension, then~$c\eta a$ is always false and so~$(c\eta a\rightarrow \psi)$ is always true, regardless of what~$\psi$ is. Hence, if~$a$ is \emph{not} an extension, then~$a\subseteq_{\eta} b$ is always true. So the expressions~$a\eta b$ and~$a\subseteq_{\eta} b$ will behave like membership and subset only if one restricts attention to~$a,b,$ that are extensions. In what follows, it will also be useful to introduce some notation for a successor-like operation on extensions. So let us say that 
\begin{equation}\label{eqn:iamsuccessorfunctiondefin}
\sigma(x) = y \Longleftrightarrow \exists \; F \; \exists \; G \; [\partial(F)=x \; \& \; \partial(G)=y \; \& \; \forall \; z \; (Gz\leftrightarrow (Fz \vee z=x))]
\end{equation}
However, this function is not total, and in particular it should be emphasized that~$\sigma(x)$ is only well-defined when~$x$ is an extension. Accordingly, the graph of the function~$x\mapsto \sigma(x)$ does not exist as a binary concept, since if it did, then its domain would likewise exist, and its domain is precisely~$\mathrm{rng}(\partial)$. However, when~$\sigma(x)$ is defined, note that it satisfies~$z\eta (\sigma(x))$~iff either~$z\eta x$ or~$z=x$. This of course reminds us of the usual set-theoretic successor operation~$x\mapsto (x\cup \{x\})$.

In the axiomatic development of systems related to~${\tt \Sigma^1_1\mbox{-}LB}$, the crucially important concept is the notion of transitive closure. If~$F$ is a concept, then let us say that~$F$ is {\it~$\eta$-transitive} or {\it~$\eta$-closed} if~$(Fx \; \& \; y\eta x)$ implies~$Fy$, for all~$x,y$. Then we define transitive closure as follows:
\begin{equation}\label{eqn:iamtransitiveclosureeta} 
(\mathrm{Trcl}_{\eta}(x))(y) \equiv \forall \; F \; [\mbox{$F$ is~$\eta$-transitive \&~$x\subseteq_{\eta} \partial(F)$}]\rightarrow Fy
\end{equation}
It is easily provable that~$\mathrm{Trcl}_{\eta}(x)$ also has the following properties:
\begin{prop}\label{prop:elementary} (Elementary Facts about Transitive Closure)
\begin{enumerate}
\item \emph{Transitive Closure is~$\eta$-transitive}:~$[(\mathrm{Trcl}_{\eta}(x))(y) \wedge~z\eta y]$ implies~$(\mathrm{Trcl}_{\eta}(x))(z)$.
\item \emph{Transitive Closure is an~$\eta$-superclass}:~$w\eta x$ implies~$(\mathrm{Trcl}_{\eta}(x))(w)$.
\end{enumerate}
\end{prop}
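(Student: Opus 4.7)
The plan is to prove both parts by directly unpacking the definition~(\ref{eqn:iamtransitiveclosureeta}), without invoking any substantive comprehension. The key preliminary observation is that for any concept~$F$, the clause~$x \subseteq_{\eta} \partial(F)$ simplifies by Basic Law~V: unfolding~(\ref{eqn:defn:subsetfrege}) and~(\ref{eqn:Fregemembership}), to say~$c\eta \partial(F)$ is to assert the existence of a concept~$B$ with~$\partial(B)=\partial(F)$ and~$Bc$, and the injectivity of~$\partial$ forces~$B=F$ (extensionally), so that~$c\eta \partial(F)$ is equivalent to~$Fc$. Consequently,~$x \subseteq_{\eta} \partial(F)$ amounts simply to~$\forall \; c\; (c\eta x\rightarrow Fc)$, and the membership clause~$(\mathrm{Trcl}_{\eta}(x))(y)$ reads: for every concept~$F$ which is~$\eta$-transitive and which contains every~$\eta$-member of~$x$, one has~$Fy$.

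For part~(1), assume~$(\mathrm{Trcl}_{\eta}(x))(y)$ and~$z\eta y$. To show~$(\mathrm{Trcl}_{\eta}(x))(z)$, fix an arbitrary~$\eta$-transitive concept~$F$ satisfying~$x\subseteq_{\eta} \partial(F)$. By the hypothesis applied to~$F$, we get~$Fy$. Since~$z\eta y$ and~$F$ is~$\eta$-transitive, we conclude~$Fz$, as desired. For part~(2), assume~$w\eta x$ and let~$F$ be any~$\eta$-transitive concept with~$x\subseteq_{\eta} \partial(F)$. By the simplification above,~$x\subseteq_{\eta}\partial(F)$ is just~$\forall \; c\; (c\eta x\rightarrow Fc)$, so~$w\eta x$ immediately yields~$Fw$; hence~$(\mathrm{Trcl}_{\eta}(x))(w)$.

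I do not anticipate a genuine obstacle here: both clauses are essentially tautological once the definition of~$\mathrm{Trcl}_{\eta}$ is correctly read, and the only non-cosmetic step is the use of Basic Law~V to collapse~$c\eta \partial(F)$ to~$Fc$. The one subtlety worth flagging is logical complexity: the defining formula~(\ref{eqn:iamtransitiveclosureeta}) begins with a universal second-order quantifier, so~$(\mathrm{Trcl}_{\eta}(x))(y)$ is~$\Pi^1_1$ rather than first-order, and the concept~$\mathrm{Trcl}_{\eta}(x)$ need not itself be posited to exist as a concept in order to prove the displayed implications --- the statements~(1) and~(2) can be read schematically as implications between the~$\Pi^1_1$-formulas~$(\mathrm{Trcl}_{\eta}(x))(\cdot)$, and their verification proceeds entirely within pure second-order logic plus Basic Law~V, requiring no comprehension axioms at all.
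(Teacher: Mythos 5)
Your proof is correct and is exactly the elementary unpacking the paper has in mind --- the paper states this proposition without proof, merely noting it is ``easily provable,'' and your two observations (that $c\,\eta\,\partial(F)$ collapses to $Fc$ via the injectivity direction of Basic Law~V, and that no comprehension is needed since $\mathrm{Trcl}_{\eta}(x)$ is treated schematically) supply precisely the missing details. The remark about the $\Pi^1_1$ complexity of the defining formula is a worthwhile addition, since the existence of the transitive closure as a concept is a genuinely substantive matter addressed only much later in the paper.
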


So now we may describe the procedure for carving out a model of a fragment of classical set theory~{\tt ZFC} from a model of~$\mathcal{M}$ of~${\tt \Sigma^1_1\mbox{-}LB}$. Since the foundation~axiom is a traditional part of~{\tt ZFC}, we want to ensure that our fragments always include this axiom, and for this purpose it is important that we avoid infinite descending~$\eta$-chains. Since~$\mathcal{M}$ has second-order resources, this can be effected in a straightforward manner. In particular, if~$\mathcal{X}\subseteq M$ and~$\mathcal{R}\subseteq \mathcal{X}\times \mathcal{X}$ are~$\mathcal{M}$-definable (but not necessarily elements of~$S_k(M)$), then let us say that ``{\it{}$(\mathcal{X},\mathcal{R})$ is well-founded in~$\mathcal{M}$}'' if~$\mathcal{M}$ models that every non-empty \emph{subconcept} of~$\mathcal{X}$ has an~$\mathcal{R}$-least member, i.e.~$\mathcal{M}$ models~$\forall \; F \; [\exists \; x \; Fx \; \& \; \forall \; x \; (Fx\rightarrow \mathcal{X}(x))] \rightarrow [\exists \; y \; Fy \; \& \; \forall \; z \; (Fz \rightarrow \neg \mathcal{R}(z,y))]$. A special case of this is when~$X$ is a concept and~$R$ is a binary concept, in which case we likewise define ``\emph{$(X,R)$ is well-founded in~$\mathcal{M}$}'' to mean that~$\mathcal{M}$ models that every non-empty subconcept of~$X$ has an~$R$-least element, i.e that~$\mathcal{M}$ models
\begin{equation}\label{eqn:well-foundeddd123412341}
\forall \; F \; [\exists \; x \; Fx \; \& \; \forall \; x \; (Fx\rightarrow Xx)] \rightarrow [\exists \; y \; Fy \; \& \; \forall \; z \; (Fz \rightarrow \neg R(z,y))]
\end{equation}
Since~$S_1(M)$ is in general a small subset of~$P(M)$, we of course need to be wary of inferring from ``$(X, R)$ is well-founded in~$\mathcal{M}$'' to~$(X, R)$ having no infinite descending~$R$-chains, or to~$(X,R)$ having no infinite~$\mathcal{M}$-definable descending~$R$-chains.

Finally, putting this all together, let us define the notion of a ``well-founded extension'':
\begin{align}
\mathrm{wfExt}(x) \equiv x\mbox{ is an extension} & \; \& \;  (\mathrm{Trcl}_{\eta}(\sigma(x)), \eta) \mbox{ is well-founded} \notag \\
&  \; \& \; (\mathrm{Trcl}_{\eta}(\sigma(x)) \subseteq \mathrm{rng}(\partial)\label{eqnasdfsadf2}
\end{align}
Given a model~$\mathcal{M}$ of~${\tt \Sigma^1_1\mbox{-}LB}$, let us define its collection of well-founded extensions as follows:
\begin{equation}\label{eqnasdfsadf1}
\mathrm{wfExt}(\mathcal{M}) = \{x\in M: \mathcal{M}\models \mathrm{wfExt}(x)\}
\end{equation}
In broad analogy with its usage in set theory, we shall sometimes refer to this as the \emph{inner model} of well-founded extensions relative to a model of~${\tt \Sigma^1_1\mbox{-}LB}$.

The other definition that we need in order to state and prove our results is a global choice principle. Suppose that~$T$ is a theory in one of our signatures. Then we let~$T+{\tt GC}$ be the expansion of~$T$ by a new binary relation symbol~$<$ on objects in the signature, with axioms saying that~$<$ is a linear order of the first-order objects, and we additionally have a schema in the expanded signature saying that any instantiated formula~$\varphi(x)$ in the expanded signature, perhaps containing parameters, that holds of some first-order object~$x$ will hold of a~$<$-least element: 
\begin{equation}
[\exists \; x \; \varphi(x)]\rightarrow [\exists \; x \; \varphi(x) \; \& \; \forall \; y<x \; \neg \varphi(y)] \label{eqn:gcschema}
\end{equation}
Since all our theories~$T$ contain first-order comprehension~(cf. Definition~\ref{pred:comp:schema}), and since instances of~$<$ are quantifier-free and hence first-order, we have that the graph of~$<$ forms a binary concept in~$T+{\tt GC}$. Of course the postulated binary relation~$<$ does not necessarily have anything to do with the usual ``less than'' relation on the natural numbers.

With this all notation in place, our main theorem can be expressed as follows,  wherein~${\tt P}$ denotes the power set axiom:
\begin{thm}\label{thm:main} (Main Theorem)
There is a model~$\mathcal{M}$ of~${\tt \Sigma^1_1\mbox{-}LB}+{\tt GC}$ such that~$(\mathrm{wfExt}(\mathcal{M}), \eta)$ satisfies the axioms of~${\tt ZFC\mbox{-}P}$.
\end{thm}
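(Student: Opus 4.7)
The plan is to pick a countable level $L_\alpha$ of the constructible hierarchy with $L_\alpha \models {\tt ZFC\mbox{-}P}$, use the projectum to code $L_\alpha$ into a Basic~Law~V structure whose first-order domain is a smaller ordinal $\rho$, and then recover $L_\alpha$ (up to isomorphism) as the well-founded extensions via Mostowski collapse. Concretely, let $\rho \leq \alpha$ be the projectum of $L_\alpha$ and let $\iota\colon L_\alpha \to \rho$ be the accompanying definable injection. Define $\mathcal{M} = (M, S_1(M), S_2(M), \ldots, \partial)$ by taking $M = \rho$, taking $S_n(M) = \{X \in L_\alpha : X \subseteq \rho^n\}$, and setting $\partial(X) = \iota(X)$ for $X \in S_1(M)$. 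Injectivity of $\iota$ yields Basic~Law~V, and the section operator $R \mapsto R[\overline{a}]$ is interpreted by the obvious operation in $L_\alpha$. Global choice is interpreted by the restriction to $\rho$ of the constructible order $<_L$.

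Next, one verifies that $\mathcal{M}$ models ${\tt \Sigma^1_1\mbox{-}LB} + {\tt GC}$. First-order comprehension holds because any subset of $\rho^n$ that is first-order definable from parameters in $L_\alpha$ is itself definable over $L_\alpha$ and hence lies in $L_\alpha$ by separation; the same argument provides the $<_L$ concept required for {\tt GC}. The key step is $\Sigma^1_1$-choice: given a $\Sigma^1_1$ formula $\varphi(\overline{x}, R')$ satisfied by some $R'$ for each $\overline{x}$, I would apply weak uniformization (Proposition~\ref{prop:weakuniform}) inside $L_\alpha$ to produce a single $R \in L_\alpha$ whose $\overline{x}$-sections witness $\varphi$. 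The fact that the projectum is $\rho$, so that such a witness set can be coded on an ordinal below $\alpha$ and thus remains in $L_\alpha$, is what makes this work.

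The second half is to show $(\mathrm{wfExt}(\mathcal{M}), \eta) \models {\tt ZFC\mbox{-}P}$. I would establish a bijection $\pi\colon (\mathrm{wfExt}(\mathcal{M}), \eta) \to (L_\alpha, \in)$ as follows. For each $x \in L_\alpha$, define a code $\widehat{x} \in M$ by recursion on the rank of $x$: letting $X_x = \{\widehat{y} : y \in x\} \subseteq \rho$ (which lies in $L_\alpha$ by replacement), set $\widehat{x} = \partial(X_x)$. Induction on rank shows that $\widehat{x}$ lies in $\mathrm{wfExt}(\mathcal{M})$: its $\eta$-transitive closure consists of codes $\widehat{y}$ for $y \in \mathrm{trcl}(x)$ and is thus contained in $\mathrm{rng}(\partial)$, while any $\mathcal{M}$-definable descending $\eta$-chain would transfer back via $\pi^{-1}$ to a descending $\in$-chain in $L_\alpha$, which is impossible. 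Conversely, each $a \in \mathrm{wfExt}(\mathcal{M})$ is the code of a unique $x \in L_\alpha$ via the Mostowski collapse of $(\mathrm{Trcl}_\eta(\sigma(a)), \eta)$, whose execution is the role played by Axiom~Beta. Since $\pi$ is an $(\eta, \in)$-isomorphism and $L_\alpha \models {\tt ZFC\mbox{-}P}$, the same holds in $(\mathrm{wfExt}(\mathcal{M}), \eta)$.

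The main obstacle is verifying that the separation and replacement schemes of ${\tt ZFC\mbox{-}P}$ transfer correctly: each such axiom in $(\mathrm{wfExt}(\mathcal{M}), \eta)$ requires that an $\eta$-definable class in the inner model correspond to some genuine extension in $\mathcal{M}$, which in turn requires that the relevant $\eta$-formula, when translated into the language of $\mathcal{M}$, define a concept available under the comprehension of ${\tt \Sigma^1_1\mbox{-}LB}$. Tracking this translation --- in particular confirming that quantifiers ranging over $\mathrm{wfExt}(\mathcal{M})$ do not push the formula complexity beyond what $\Sigma^1_1$-choice can handle, and that the resulting witness concepts really lie in $L_\alpha$ --- is the technical heart of the argument, and it is where the projectum (keeping $\rho < \alpha$), weak uniformization, and the closure of $L_\alpha$ under definable operations all need to work together.
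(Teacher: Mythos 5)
Your proposal founders on its very first step, and the failure is instructive because it is precisely the obstacle the paper's proof is built to circumvent. You propose to ``pick $L_\alpha \models {\tt ZFC\mbox{-}P}$'' and then use a projectum drop $\rho<\alpha$ to obtain the type-lowering map $\partial$. But these two demands are incompatible: if $L_\alpha$ satisfies full separation and collection, then its $\Sigma_n$-projectum is $\alpha$ for every $n$. (If $\iota:\alpha\to\rho$ were a definable injection with $\rho<\alpha$, then $\iota[\alpha]\subseteq\rho$ would be a set by separation, and applying replacement to $\iota^{-1}$ would make $\alpha$ itself a set in $L_\alpha$ --- a contradiction.) And without the drop the construction collapses: with $M=\rho=\alpha$, every $X\in P(\alpha)\cap L_\alpha$ lies in some $L_\beta$ with $\beta<\alpha$ and is therefore a \emph{bounded} subset of $\alpha$, so even the concept $\{x:x=x\}$ fails to exist and First-Order Comprehension is already false; likewise the witness $R$ for $\Sigma^1_1$-Choice that you want to extract from weak uniformization would be a proper class of $L_\alpha$ rather than an element of $P(\rho\times\rho)\cap L_\alpha$ (the paper's verification of $\Sigma^1_1$-Choice uses $\Sigma_n$-replacement on the \emph{set} $\rho<\alpha$ to internalize the uniformizing function). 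So no single $L_\alpha$ can play the role you assign it.

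The paper's actual route accepts this tension and resolves it by compactness. The Existence Theorem~\ref{thm:basicexistence} produces, for each $n$, a $\Sigma_n$-admissible $\alpha_n$ with $\rho_n(\alpha_n)=\lambda<\alpha_n$ satisfying Axiom Beta; such an $L_{\alpha_n}$ satisfies only $\Sigma_{n-1}$-separation and $\Sigma_n$-collection, which is exactly why its projectum can drop. The two Identification Theorems (\ref{thM:firstidwef}, \ref{thm1}) then show --- using essentially the ingredients you correctly identify: the recursion $j(x)=\partial(\{j(y):y\in x\})$, Proposition~\ref{iamaveryhelpfulprop}, and the Mostowski collapse supplied by Axiom Beta --- that the resulting model $\mathcal{N}_n$ of ${\tt \Sigma^1_1\mbox{-}LB}+{\tt GC}$ has $(\mathrm{wfExt}(\mathcal{N}_n),\eta)\cong(L_{\alpha_n},\in)$. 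No single $\mathcal{N}_n$ does the whole job, since $L_{\alpha_n}$ models only a fragment of ${\tt ZFC\mbox{-}P}$; but every finite subset of ${\tt ZFC\mbox{-}P}$ (relativized to $\mathrm{wfExt}$) is contained in the $n$-th fragment for some $n$, so compactness yields the desired $\mathcal{M}$. Your final paragraph's worry about ``translating'' separation and replacement into the language of $\mathcal{M}$ is, by contrast, a non-issue: once the isomorphism with $(L_{\alpha},\in)$ is in hand, ${\tt ZFC\mbox{-}P}$ is an $\{\in\}$-theory and transfers automatically. The missing idea is the compactness step, and it is not optional.
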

\noindent This result is proven at the close of \S\ref{sec05}. It is significant primarily because it shows us what kind of set theory may be consistently developed if one takes Basic Law~V as a primitive. Now, one subtlety should be mentioned here at the outset: in the absence of power set, it is not entirely obvious which form of replacement and which form of choice is optimal. The discussion in Gitman-Hamkins-Johnstone (\cite{Gitman2011aa}) suggests that instead of the replacement schema one should use the collection schema, and as for the axiom of choice one should use the principle that every set can be well-ordered; the reason in each case being that these are the deductively stronger principles in the absence of powerset. (For a formal statement of the collection schema, cf. equation~(\ref{eqn:collectionschema})). As we will note when establishing our main theorem in \S\ref{sec05}, our models satisfy these principles as well. Hence, for the sake of concreteness, in this paper we may define~${\tt ZFC\mbox{-}P}$ as follows:
\begin{defn}\label{defn:ZFCminusP}
${\tt ZFC\mbox{-}P}$ is the theory consisting of extensionality, pairing, union, infinity, separation, collection, foundation, and the statement that every set can be well-ordered.
\end{defn}
\noindent For precise definitions of these axioms, one may consult any standard set theory textbook (\cite{Kunen1980,Kunen2011aa},~\cite{Jech2003}; and for the collection schema see again equation~(\ref{eqn:collectionschema})).

The Main Theorem~\ref{thm:main} is a natural analogue of the work of Boolos, Hodes, and Cook's on the axiom ``New V'' (\cite{Boolos1989aa},~\cite{Hodes1991},~\cite{Cook2003ab}). This is the axiom in the signature of Basic Law~V, but where, for the sake of disambiguation, we write the type-lowering operator with the symbol~$\partial^{\prime}$ as opposed to~$\partial$. The axiom \emph{New~V} then says that
\begin{equation}
 \emph{New~V}: \hspace{3mm} \forall \; X,Y \; (\partial^{\prime}(X)=\partial^{\prime}(Y) \leftrightarrow ((\mathrm{Small}(X) \vee \mathrm{Small}(Y)) \rightarrow X=Y)) \label{eqn:NewV}
\end{equation}
Here~$\mathrm{Small}(X)$ is an abbreviation for the statement that~$X$ is not bijective with the universe of first-order objects~$\{x: x=x\}$. So if~$\mathcal{M}=(M, S_1(M), S_2(M), \ldots, \partial^{\prime})$ is a model of New~V, then~$\mathcal{M}\models \mathrm{Small}(X)$ if and only if there's no bijection~$f:X\rightarrow M$ whose graph is in~$S_2(M)$. 

To see the connection between New~V and {\tt ZFC}, recall that for a cardinal~$\kappa$, the set~$H_{\kappa}$ is defined as~$H_{\kappa}=\{x: \left|\mathrm{trcl}(x)\right|<\kappa\}$ (cf.~\cite{Kunen1980} \S{IV.6} pp. 130 ff,~\cite{Kunen2011aa} p. 78,~\cite{Jech2003} p. 171). Suppose that~$\kappa>\omega$ is regular and satisfies~$\left|H_{\kappa}\right|=\kappa$. In this circumstance, let us define:
\begin{equation}
\mathbb{H}_{\kappa} = (H_{\kappa}, P(H_{\kappa}), P(H_{\kappa} \times H_{\kappa}), \ldots, \partial^{\prime})
\end{equation}
where~$\partial^{\prime}(X)=\langle 1, X\rangle$ if~$\left|X\right|<\kappa$ and~$\partial^{\prime}(X) = \langle 0,0\rangle$ otherwise (wherein~$\langle \cdot,\cdot\rangle$ is the usual set-theoretic pairing function). Then in analogue to Frege's definition of membership in equation~(\ref{eqn:Fregemembership}), we can define a quasi-membership relation~$\eta^{\prime}$ in models of New~V as follows:
\begin{equation}\label{eqn:newVmembership}
a\eta^{\prime} b \Longleftrightarrow \exists \; B \; (\mathrm{Small}(B) \; \& \; \partial^{\prime}(B)=b \; \& \; Ba)
\end{equation}
Likewise, we can define~$\mathrm{wfExt^{\prime}}$ using the relation~$\eta^{\prime}$ just as~$\mathrm{wfExt}$ is defined in equation~(\ref{eqnasdfsadf2}) using the relation~$\eta$. Then one may prove that~$\mathbb{H}_{\kappa}$ is a model of New~V and~$(\mathrm{wfExt^{\prime}}(\mathbb{H}_{\kappa}), \eta^{\prime})$ is isomorphic to~$(H_{\kappa}, \in)$, which is known to model~${\tt ZFC\mbox{-}P}$ when~$\kappa>\omega$ is regular (cf.~\cite{Kunen1980} Theorem IV.6.5 p. 132,~\cite{Kunen2011aa} Theorem II.2.1 p. 109,~\cite{Jech2003} p. 171). Hence one has following:
\begin{prop}\label{cor:mainanalogue}
There is a model~$\mathcal{M}$ of New~V and the Full Comprehension Schema~(cf. Definition~\ref{eqn:unaryfullcomp}) such that~$(\mathrm{wfExt^{\prime}}(\mathcal{M}), \eta^{\prime})$ satisfies the axioms of~${\tt ZFC\mbox{-}P}$ (cf. Definition~\ref{defn:ZFCminusP}).
\end{prop}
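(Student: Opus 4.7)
The plan is to implement in detail the construction sketched immediately before the statement. I would first fix a regular cardinal $\kappa>\omega$ with $\left|H_{\kappa}\right|=\kappa$ (for example, any strong limit of uncountable cofinality, or $\aleph_{1}$ under CH) and form the structure $\mathbb{H}_{\kappa}=(H_{\kappa},P(H_{\kappa}),P(H_{\kappa}^{2}),\ldots,\partial^{\prime})$ with its full second-order parts, so that the Full Comprehension Schema (Definition~\ref{eqn:unaryfullcomp}) holds automatically: every formula-definable subset of $H_{\kappa}^{n}$ is already a member of $S_{n}(\mathbb{H}_{\kappa})=P(H_{\kappa}^{n})$.

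Next I would verify that $\mathbb{H}_{\kappa}\models\mathrm{New\ V}$. Because $S_{2}(\mathbb{H}_{\kappa})$ contains \emph{every} subset of $H_{\kappa}\times H_{\kappa}$, the predicate $\mathrm{Small}(X)$ in $\mathbb{H}_{\kappa}$ holds iff there is no genuine bijection $X\to H_{\kappa}$, i.e.\ iff $\left|X\right|<\left|H_{\kappa}\right|=\kappa$. Inspection of the definition $\partial^{\prime}(X)=\langle 1,X\rangle$ when $\left|X\right|<\kappa$ and $\partial^{\prime}(X)=\langle 0,0\rangle$ otherwise then yields exactly the biconditional in equation~(\ref{eqn:NewV}): two small concepts have the same extension iff they are equal, and any large concept has the same extension as any other large concept.

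Third, I would construct the isomorphism $(\mathrm{wfExt^{\prime}}(\mathbb{H}_{\kappa}),\eta^{\prime})\cong(H_{\kappa},\in)$. The key computation is that for any small $B\subseteq H_{\kappa}$ and $b=\partial^{\prime}(B)=\langle 1,B\rangle$, the definition~(\ref{eqn:newVmembership}) gives $\{a:a\eta^{\prime}b\}=B$. So the map $\pi:\mathrm{wfExt^{\prime}}(\mathbb{H}_{\kappa})\to H_{\kappa}$ defined via Mostowski collapse on the well-founded extensional relation $\eta^{\prime}$ sends each well-founded extension to a genuine element of $H_{\kappa}$; conversely, by $\in$-induction one shows each $x\in H_{\kappa}$ arises this way, using that the transitive closure of $x$ has cardinality less than $\kappa$ and so every relevant subset is small and its $\partial^{\prime}$-image lies in $\mathrm{rng}(\partial^{\prime})$. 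The conditions bundled into $\mathrm{wfExt^{\prime}}$ (well-foundedness of the restricted $\eta^{\prime}$-relation, which exists by the analogue of Proposition~\ref{iamaveryhelpfulprop}, and the transitive closure lying inside $\mathrm{rng}(\partial^{\prime})$) are exactly what is needed to make this collapse well-defined and surjective onto $H_{\kappa}$. Finally, one invokes the classical fact that $(H_{\kappa},\in)\models{\tt ZFC\mbox{-}P}$ for regular $\kappa>\omega$ (cf.~\cite{Kunen1980} Theorem IV.6.5, \cite{Kunen2011aa} Theorem II.2.1, \cite{Jech2003} p.~171).

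The main obstacle is the bookkeeping in the third step: one must carefully distinguish the two kinds of ``objects'' of $\mathbb{H}_{\kappa}$ (codes $\langle 1,B\rangle$ of small concepts versus the collapsed code $\langle 0,0\rangle$ for large ones and all non-extensions) and check that $\mathrm{wfExt^{\prime}}$ filters to precisely the former, with $\eta^{\prime}$ faithfully representing $\in$ on them. The earlier elementary facts (Proposition~\ref{prop:elementary} together with the New~V-analogue of Proposition~\ref{iamaveryhelpfulprop}, which is available here because Full Comprehension gives every binary relation) handle the transitive-closure manipulations; everything else is a direct unwinding of definitions.
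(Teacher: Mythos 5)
Your proposal is correct and follows essentially the same route as the paper, which establishes the proposition by taking $\mathbb{H}_{\kappa}$ for a regular $\kappa>\omega$ with $\left|H_{\kappa}\right|=\kappa$, checking New~V via the $\mathrm{Small}$/$\partial^{\prime}$ dichotomy, and identifying $(\mathrm{wfExt^{\prime}}(\mathbb{H}_{\kappa}),\eta^{\prime})$ with $(H_{\kappa},\in)$. The only quibble is your parenthetical example of ``any strong limit of uncountable cofinality,'' which need not be regular; but $\aleph_{1}$ under CH (or working in $L$) suffices, exactly as the paper intends.
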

\noindent The Main Theorem~\ref{thm:main} establishes an analogous result for Basic~Law~V in the setting of limited amounts of comprehension.

\section{Predicative Abstraction Principles}\label{sec0.1555predab}

The axioms Basic Law~V and New~V are examples of what are now called \emph{abstraction principles}. If~$E(R,S)$ is a formula of second-order logic with exactly two free~$n$-ary relation variables for some~$n\geq 1$ then the \emph{abstraction principle}~$A[E]$ associated to~$E$ is the following axiom in a signature expanded by a new function symbol~$\partial_E$ from~$n$-ary relations to objects:
\begin{equation}
A[E]: \hspace{15mm} \forall \; R, S, [\partial_E(R)= \partial_E(S) \leftrightarrow E(R,S)]\label{defnAe}
\end{equation}
Abstraction principles have been studied extensively for many decades. For an introduction to this subject, see Burgess~\cite{Burgess2005}, and for many important papers, see the collections edited by Demopoulos~\cite{Demopoulos1995} and Cook~\cite{Cook2007aa}.

The first thing that one observes in this subject is that some abstraction principles are consistent with the Full Comprehension Schema in their signature~(cf. Definition~\ref{eqn:unaryfullcomp}) while others are not. For instance, we saw above that New~V is consistent with the Full Comprehension Schema in its signature, while Basic Law~V~(\ref{eqn:blv}) itself is not. Given that Basic Law~V is consistent with weaker forms of comprehension, one may ask whether there is any general method for determining whether the abstraction principle~$A[E]$ is consistent with these weaker forms of comprehension. In answering this question, it's helpful to have specific names for the theories consisting of combinations of the abstraction principle~$A[E]$ with the weaker forms of comprehension:
\begin{defn} \label{defn:oneabstract}
 For each formula~$E(R,S)$ with exactly two free~$n_E$-ary variables~$R,S$ for a specific~$n_E\geq 1$, let the theory~${\tt \Delta^1_1\mbox{-}A[E]}$ (resp.~${\tt \Sigma^1_1\mbox{-}[E]A}$) consist of~$A[E]$ from equation~(\ref{defnAe}) plus the~$\Delta^1_1$-Comprehension Schema (cf. Definition~\ref{delta11comp}) in the signature containing the function symbol~$\partial_E$ (resp.~$A[E]$ from equation~(\ref{defnAe}) plus the~$\Sigma^1_1$-Choice Schema (cf. Definition~\ref{sigam11choice}) and the First-Order Comprehension Schema~(cf. Definition \ref{pred:comp:schema}) in the signature containing the function symbol~$\partial_E$). 
\end{defn}
\noindent Further, let us define a theory of pure second-order logic in a very limited signature:
\begin{defn}
The theory~${\tt SO}$ is the second-order theory consisting of the Full Comprehension Schema~(\ref{eqn:unaryfullcomp}) in the signature of pure second-order logic bereft of all type-lowering function symbols. \label{eqn:secondorderlogic}
\end{defn}
\noindent Here the abbreviation~``${\tt SO}$'' is chosen because it reminds us of ``second-order logic.'' It's worth emphasizing that while the theory~${\tt SO}$ has full comprehension in its signature, its signature is very impoverished and does not include any of the type-lowering function symbols featuring in abstraction principles. But as with the fragments of Basic Law~V discussed in the previous sections, we're assuming that we have the function symbols~$(\overline{a}, R)\mapsto R[\overline{a}]$ from equation~(\ref{eqn:iamafunction}) in the signature of all our theories, including ${\tt SO}$. So just to be clear: the signature of ${\tt SO}$ consists merely of the predication relations $R\overline{x}$ and the maps~$(\overline{a}, R)\mapsto R[\overline{a}]$, and its axioms consist solely of the extensionality axioms and the instances of the Full Comprehension Schema~(\ref{eqn:unaryfullcomp}) in its signature. Sometimes in what follows we consider the extension ${\tt SO}+{\tt GC}$, which per the discussion of global choice in the previous section adds to the signature of ${\tt SO}$ a binary relation on objects and posits that it is a well-order. In the theory ${\tt SO}+{\tt GC}$, we adopt the convention that instances of the Full Comprehension Schema~(\ref{eqn:unaryfullcomp}) may include the global well-order.

One of our chief results on the consistency of abstraction principles with predicative levels of comprehension is the following:
\begin{thm}\label{cor:consistency}
Suppose that~$n\geq 1$ and that~$E(R,S)$ is a formula in the signature of~${\tt SO}+{\tt GC}$ which is provably an equivalence relation on~$n$-ary concepts in~${\tt SO}+{\tt GC}$. Then~${\tt \Sigma^1_1\mbox{-}[E]A}+{\tt SO}+{\tt GC}$ is consistent.
\end{thm}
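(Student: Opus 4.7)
The plan is to construct a model of ${\tt \Sigma^1_1\mbox{-}[E]A}+{\tt SO}+{\tt GC}$ inside a carefully chosen level $L_\alpha$ of G\"odel's constructible hierarchy, using the projectum to define $\partial_E$ and the weak uniformization of Proposition~\ref{prop:weakuniform} to secure $\Sigma^1_1$-Choice. The broad strategy mirrors the intuitive sketch given in the introduction: represent concepts by constructible subsets of an ordinal, represent their $E$-equivalence classes by the images under a projectum injection $\iota$ of their $<_L$-least members, and rely on $<_L$ both for Global Choice and for uniformizing existential statements.

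First, I would fix a level $L_\alpha$ chosen so that (i) $L_\alpha$ admits a definable projectum injection $\iota:L_\alpha\to\rho$ for some ordinal $\rho\leq\alpha$, and (ii) $L_\alpha$ is sufficiently closed to internalize arbitrary second-order formulas in the signature of ${\tt SO}+{\tt GC}$ over any of its concepts. I would then set $M:=\rho$, $S_n(M):=P(M^n)\cap L_\alpha$ for each $n\geq 1$, and take the restriction of the ordinal order on $\rho$ as the witness for ${\tt GC}$. Using that $E$ is provably an equivalence relation, I would define the abstraction operator by
\[
\partial_E(R) \;:=\; \iota(S_R), \qquad S_R \;:=\; \text{the }<_L\text{-least } S\in S_n(M) \text{ with } \mathcal{M}\models E(R,S),
\]
so that reflexivity guarantees $S_R$ exists, symmetry and transitivity make $R\mapsto S_R$ constant on $E$-classes, and distinct $E$-classes yield distinct $\partial_E$-values. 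The model is $\mathcal{M}:=(M,S_1(M),S_2(M),\ldots,\partial_E,<)$ and validates $A[E]$ by construction.

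It then remains to verify the comprehension and choice schemata. Full Comprehension in the pure ${\tt SO}+{\tt GC}$ signature (without $\partial_E$) follows from (ii): any such formula, with its higher-order quantifiers reinterpreted as ranging over $S_n(M)\subseteq L_\alpha$, is evaluated inside $L_\alpha$ and defines a subset of $M^n$ that lies in $L_\alpha$. First-Order Comprehension in the extended signature is immediate since the graph of $\partial_E$ is in $L_\alpha$ and first-order definable subsets of $M^n$ with parameters from $L_\alpha$ remain in $L_\alpha$. For $\Sigma^1_1$-Choice, given an instance $\forall\,\overline{x}\,\exists\,R'\,\varphi(\overline{x},R')$, I would invoke Proposition~\ref{prop:weakuniform} to pick $R'_{\overline{x}}$ uniformly as the $<_L$-least witness $R'\in S_m(M)$ of $\varphi(\overline{x},R')$, and then combine these slices into a single $R\in S_{n+m}(M)$ witnessing the compact formulation in equation~\eqref{eqn:LATE:formchoice2}.

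The principal obstacle will be the complexity bookkeeping in the $\Sigma^1_1$-Choice step once $\partial_E$ is permitted in $\varphi$. Eliminating $\partial_E$ in favour of its definition injects the formula $E$ together with a $<_L$-minimization into $\varphi$; this elimination is acceptable only because $E$ is, by hypothesis, already expressible in the $\partial_E$-free signature of ${\tt SO}+{\tt GC}$, so that unfolding $\partial_E$ does not introduce any new unbounded higher-order quantifier alternations beyond what Proposition~\ref{prop:weakuniform} is designed to uniformize. Carrying out this complexity estimate, and then verifying that the resulting witness $R$ lies inside $S_{n+m}(M)=P(M^{n+m})\cap L_\alpha$ rather than spilling outside $L_\alpha$, is where the technical heart of the proof will lie.
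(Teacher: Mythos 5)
Your overall architecture --- concepts as constructible subsets of an ordinal $\rho$, $\partial_E$ as $\iota$ applied to the $<_L$-least representative of the $E$-class, the ordinal order on $\rho$ witnessing ${\tt GC}$, and $<_L$-least witnesses for $\Sigma^1_1$-Choice --- is exactly that of the paper's Construction Theorem~\ref{thm:BCT}. The gap is in how you propose to secure the \emph{Full} Comprehension Schema of ${\tt SO}$. You posit a single $L_\alpha$ satisfying both (i) a definable projectum injection $\iota:L_\alpha\to\rho$ and (ii) enough closure to internalize \emph{arbitrary} second-order formulas of the ${\tt SO}+{\tt GC}$ signature, and you treat (ii) as something that can simply be arranged. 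But these two demands pull against each other. If $\iota$ is definable over $L_\alpha$ and $\rho\in L_\alpha$, then the Cantor diagonal class $\{\xi<\rho:\exists\,X\,(X\subseteq\rho\;\&\;\iota(X)=\xi\;\&\;\xi\notin X)\}$ is definable over $L_\alpha$ yet cannot be an element of $L_\alpha$, so $L_\alpha$ cannot satisfy full separation on the set $\rho$; the induced model can therefore only be expected to satisfy comprehension up to the complexity level at which the projectum drops. Indeed, the levels $L_{\alpha_n}$ supplied by the Existence Theorem~\ref{thm:basicexistence} are exactly $\Sigma_n$-admissible and not $\Sigma_{n+1}$-admissible (this is precisely how the paper shows $\alpha_n<\alpha_{n+1}$), so each yields only the $\Sigma^1_{n-1}$-instances of comprehension in the ${\tt SO}$-signature, never all instances at once. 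Your proposal nowhere produces, nor argues for the existence of, a single level that does the whole job.

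The missing ingredient is a compactness argument. The paper writes ${\tt SO}$ as the union of fragments ${\tt SO}_m$ obtained by restricting Full Comprehension to its $\Sigma^1_m$-instances; by compactness it suffices to model ${\tt \Sigma^1_1\mbox{-}[E]A}+{\tt SO}_m+{\tt GC}$ for each fixed $m$. For that one chooses $n>m$ large enough that $E$ is a $\Sigma^1_{n-1}$-formula provably an equivalence relation in ${\tt SO}_{n-1}+{\tt GC}$, takes a $\Sigma_n$-admissible $\alpha$ with $\rho_n(\alpha)=\rho<\alpha$ satisfying the conclusions of Theorem~\ref{thm:basicexistence}, and checks that $\Sigma_{n-1}$-separation in $L_{\alpha}$ delivers ${\tt SO}_{n-1}\supseteq{\tt SO}_m$, while $\Sigma_n$-admissibility handles First-Order Comprehension and $\Sigma^1_1$-Choice in the signature with $\partial_E$ via Theorem~\ref{thm:BCT}. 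Your closing remarks about complexity bookkeeping correctly note that unfolding $\partial_E$ injects $E$ and a $<_L$-minimization into $\varphi$, and that this is tame because $E$ lives in the $\partial_E$-free signature; what they miss is that this same bookkeeping ties the admissibility degree $n$ of the ambient $L_\alpha$ to the quantifier complexity of the ${\tt SO}$-instances being verified, which is why the consistency of the full theory must be assembled from models of its finite fragments rather than read off from one model.
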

\noindent \noindent This result is proven in \S\ref{sec04.5} below. This result indicates that the fact that Basic Law~V is consistent with the~$\Delta^1_1$-comprehension schema and~$\Sigma^1_1$-choice schema is not an isolated phenomena, but follows from the fact that~$E(X,Y)\equiv X=Y$ is provably an equivalence relation in~${\tt SO}+{\tt GC}$. It's worth stressing that the theory $T={\tt \Sigma^1_1\mbox{-}[E]A}+{\tt SO}+{\tt GC}$ has full comprehension for formulas in the signature of ${\tt SO}+{\tt GC}$ but only has predicative comprehension for formulas in~$T$'s full signature which includes the type-lowering function~$\partial_E$.

A related problem of long-standing interest has been the ``joint consistency problem.'' This is the problem of determining natural conditions on~$E_1, E_2$ so that if~$A[E_1]$ and~$A[E_2]$ has a standard model then~$A[E_1]\wedge A[E_2]$ has a standard model. A second-order theory is said to have a \emph{standard model} if it has a model~$\mathcal{M}$ satisfying~$S_n(M)=P(M^n)$ for all~$n\geq 1$, where we here employ the notation introduced in the previous section in equation~(\ref{eqn:blvmodels}) for models. This is a non-trivial problem: for, some~$A[E_1]$ have standard models~$\mathcal{M}$ only when the underlying first-order domain~$M$ is finite, such as when~$E_1(X,Y)$ is expressive of the symmetric difference of~$X$ and~$Y$ being Dedekind-finite (cf.~\cite{Boolos1998} p. 215,~\cite{Hale2001} pp. 289 ff). However, other~$A[E_2]$ have a standard model~$\mathcal{M}$ with underlying first-order domain~$M$ only when~$M$ is infinite, such as when~$E_2(X,Y)$ is expressive of~$X,Y$ being bijective. 

In the setting of limited amounts of comprehension, the most obvious analogue of the joint consistency problem is to ask about the extent to which it is consistent that~$A[E_1]\wedge A[E_2]$ has a model satisfying e.g. the~$\Delta^1_1$-comprehension schema when each~$A[E_i]$-individually does. Formally, let us introduce the following theories:
\begin{defn} 
 \label{predabstracdefn} The theory~${\tt \Delta^1_1\mbox{-}A[E_1, \ldots, E_k]}$ (resp.~${\tt \Sigma^1_1\mbox{-}[E_1, \ldots, E_k]A}$) consists both of the abstraction principles~$A[E_1] \wedge \cdots \wedge A[E_k]$~(\ref{defnAe}) and the~$\Delta^1_1$-Comprehension Schema (cf. Definition~\ref{delta11comp})  (resp. plus the~$\Sigma^1_1$-Choice Schema (cf. Definition~\ref{sigam11choice}) and the First-Order Comprehension Schema (cf. Definition~\ref{pred:comp:schema})) in the signature containing all the function symbols~$\partial_{E_1}, \ldots, \partial_{E_k}$.
\end{defn}

Our result Theorem~\ref{cor:consistency} from above is a direct consequence of the following theorem, which indicates that the joint consistency problem does not arise in the setting with limited amounts of comprehension, assuming that we can prove the formulas are equivalence relations in~${\tt SO}+{\tt GC}$, and assuming that the equivalence relations are expressible in the signature of~${\tt SO}+{\tt GC}$:
\begin{thm}\label{thm:jointconsistency} (Joint Consistency Theorem) 
Suppose~$n_1, \ldots, n_k\geq 1$ and that the formulas~$E_1(R,S)$, \ldots,~$E_k(R,S)$ in the signature of~${\tt SO}+{\tt GC}$ are provably equivalence relations on~$m_i$-ary concepts in~${\tt SO}+{\tt GC}$. Then the theory~${\tt \Sigma^1_1\mbox{-}[E_1, \ldots, E_k]A}+{\tt SO}+{\tt GC}$ is consistent.
\end{thm}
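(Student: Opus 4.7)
The plan is to construct the model directly from the constructible hierarchy, along the lines intimated in the introduction. Fix an ordinal $\alpha$ large enough that $L_\alpha$ admits a projectum $\rho < \alpha$ with a definable injection $\iota : L_\alpha \to \rho$, and at which the weak uniformization principle of Proposition~\ref{prop:weakuniform} is available. Interpret the first-order domain as $M = \rho$, the $n$-ary concepts as $S_n(M) = P(M^n) \cap L_\alpha$, and the global well-order as the restriction of $<_L$ to $\rho$. Because $L_\alpha$ is closed under forming constructibly-definable subsets of $M^n$, the ${\tt SO}+{\tt GC}$-reduct of this structure already satisfies Full Comprehension in its pure second-order signature; and because $<_L$ genuinely well-orders $M$, the ${\tt GC}$ schema comes for free even in the full signature once we verify that the graph of $<_L \restriction \rho$ lies in $S_2(M)$, which it does provided $\alpha$ is chosen past the relevant closure point.

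For each $i \leq k$, define the type-lowering operator $\partial_{E_i} : S_{n_i}(M) \to M$ by $\partial_{E_i}(R) = \iota(R^*)$, where $R^*$ is the $<_L$-least element of the $E_i$-equivalence class of $R$ inside $S_{n_i}(M)$. Existence of $R^*$ uses that $E_i$ is provably an equivalence relation in ${\tt SO}+{\tt GC}$, together with the observation that the ${\tt SO}+{\tt GC}$-reduct of our structure already models that theory. By injectivity of $\iota$, we have $\partial_{E_i}(R) = \partial_{E_i}(S)$ iff $R^* = S^*$ iff $[R]_{E_i} = [S]_{E_i}$ iff $E_i(R,S)$, so each abstraction principle $A[E_i]$ holds in $\mathcal{M}$.

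What remains is to verify First-Order Comprehension and the $\Sigma^1_1$-Choice Schema in the full signature, which now includes each $\partial_{E_i}$ alongside the function symbols $(\overline{a}, R) \mapsto R[\overline{a}]$. The key observation is that $\iota$ and each $\partial_{E_i}$ are $L_\alpha$-definable at a bounded complexity, so that any first-order formula of the full signature interprets as a bounded-complexity condition over $L_\alpha$, and the corresponding concept is handed back by constructible closure. For $\Sigma^1_1$-Choice, given $\forall \; \overline{x} \; \exists \; R' \; \varphi(\overline{x}, R')$ with $\varphi$ a $\Sigma^1_1$-formula in the full signature, translate this to a corresponding total relation over $L_\alpha$ and apply Proposition~\ref{prop:weakuniform} to obtain a constructible uniformizing function, which then codes up as a single $(n+m)$-ary concept $R \in S_{n+m}(M)$ satisfying the compact form~\eqref{eqn:LATE:formchoice2}. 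The main obstacle is precisely the complexity bookkeeping here: $\alpha$ must be chosen, presumably as the least ordinal past a suitable closure point associated with the projectum, so that the combined complexity of $\varphi$, of $\iota$, and of the $\partial_{E_i}$ still lies within the complexity class uniformizable by the weak uniformization available at $L_\alpha$. Once this alignment is arranged, all remaining verifications follow from closure of $L_\alpha$ under the constructible operations.
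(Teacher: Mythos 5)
Your construction is essentially the paper's (interpret objects as the projectum $\rho$, concepts as $P(\rho^{m})\cap L_{\alpha}$, and send each concept to $\iota$ of the $<_L$-least member of its $E_i$-class), but there is a genuine gap at the very first step: you claim that ``the ${\tt SO}+{\tt GC}$-reduct of this structure already satisfies Full Comprehension in its pure second-order signature'' because $L_{\alpha}$ is closed under definable subsets. This is false for the structures you need. Full Comprehension in the reduct $\mathcal{M}=(\rho, P(\rho)\cap L_{\alpha},\ldots)$ corresponds to $\Sigma_j$-separation in $L_{\alpha}$ on the set $\rho$ for \emph{all} $j$, since $j$ alternating blocks of second-order quantifiers over $P(\rho^k)\cap L_{\alpha}$ translate to $\Sigma_j$-conditions over $L_{\alpha}$. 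But the hypothesis that the $n$-th projectum drops, $\rho_n(\alpha)=\rho<\alpha$ --- which is exactly what you need to define $\iota$ and hence the operators $\partial_{E_i}$ --- is incompatible with $\Sigma_n$-separation in $L_{\alpha}$: if $L_{\alpha}$ satisfied $\Sigma_n$-separation (together with $\Sigma_n$-replacement, which it has by admissibility), the domain of the $\utilde{\Sigma}_n^{L_{\alpha}}$-definable surjection inverse to $\iota$ would be a set in $L_{\alpha}$ and its image would be bounded, contradicting surjectivity onto $L_{\alpha}$. So no single structure of your proposed form models all of ${\tt SO}$; it models only $\Sigma^1_{n-1}$-comprehension in the pure signature. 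The same problem infects your appeal to ``the reduct already models ${\tt SO}+{\tt GC}$'' when you invoke provability of the $E_i$ being equivalence relations.

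The paper's proof repairs exactly this by a compactness argument that your proposal omits: write ${\tt SO}$ as the union of the fragments ${\tt SO}_m$ ($\Sigma^1_m$-instances of comprehension only), and for each fixed $m$ choose $n>m$ large enough that the $E_i$ are $\Sigma^1_{n-1}$-formulas provably equivalence relations already in ${\tt SO}_{n-1}+{\tt GC}$, then take a $\Sigma_n$-admissible $\alpha$ with $\rho_n(\alpha)<\alpha$ from the Existence Theorem. The resulting model satisfies ${\tt SO}_{n-1}\supseteq{\tt SO}_m$ together with the abstraction principles, predicative comprehension, and $\Sigma^1_1$-choice; consistency of the full theory then follows by compactness. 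Your remaining verifications (translating first-order formulas of the expanded signature into bounded-complexity conditions over $L_{\alpha}$, and uniformizing for $\Sigma^1_1$-choice) are in the right spirit and match the paper's Construction Theorem, but they only go through once the level $n$ is calibrated to the fragment ${\tt SO}_m$ being targeted rather than to all of ${\tt SO}$ at once.
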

\noindent This result is proven in \S\ref{sec04.5} below. It's worth again underscoring that the theory $T={\tt \Sigma^1_1\mbox{-}[E_1, \ldots, E_k]A}+{\tt SO}+{\tt GC}$ has full comprehension for formulas in the signature of ${\tt SO}+{\tt GC}$ but only has predicative comprehension for formulas in~$T$'s full signature which includes the type-lowering function~$\partial_{E_1}, \ldots, \partial_{E_k}$. By compactness, this theorem establishes the consistency of a theory which includes abstraction principles associated to each formula in the signature of ${\tt SO}+{\tt GC}$ which one can prove to be an equivalence relation in ${\tt SO}+{\tt GC}$. In our companion paper  \cite{Walsh2014ad}, we study the deductive strength of this theory.

\section{Constructibility and Generalized Admissibility}\label{sec02.2}

The aim of this section is to briefly review several of the tools from constructibility that we use in the below proofs. Hence, it might be advisable to skip this section on a first read-through and refer back to this section as needed. In this section, we work entirely with fragments and extensions of the standard {\tt ZFC}-set theory, so that all structures~$M$ are structures in the signature of set-theory. The tools which we review and describe in this section come from constructibility, the study of G\"odel's universe~$L$  (cf. ~\cite{Jech2003}~Chapter 13,~\cite{Kunen1980} Chapter 6,~\cite{Kunen2011aa} II.6, \cite{Devlin1984aa}). This is the union of the  sets~$L_{\alpha}$ that are defined recursively as follows, wherein~$\mathrm{Defn}(M)$ refers to the subsets of~$M$ which are definable with parameters (when~$M$ is conceived of as having, as its only primitive, the membership relation restricted to its elements):
\begin{equation}\label{defn:L}
L_0=\emptyset, \hspace{10mm} L_{\alpha+1}=\mathrm{Defn}(L_{\alpha}), \hspace{10mm} L_{\alpha}=\bigcup_{\beta<\alpha} L_{\beta} \mbox{~for~$\alpha$ a limit}
\end{equation}

One tool which we shall use frequently in this paper is the following natural generalization of the notion of an admissible ordinal:
\begin{defn}\label{defn:nadmissible}
For~$n\geq 1$, an ordinal~$\alpha$ is \emph{$\Sigma_n$-admissible} if~$\alpha$ a limit and~$\alpha>\omega$ and~$L_{\alpha}$ models~$\Sigma_n$-collection and~$\Sigma_{n-1}$-separation.
\end{defn}
\noindent Recall that the collection schema is the following schema: 
\begin{equation}\label{eqn:collectionschema}
\forall \; \overline{p} \; [\forall \; x \; \exists \; y \; \varphi(x,y,\overline{p}) ]\rightarrow [\forall \; u \; \exists \; v \; (\forall \; x\in u \; \exists \; y\in v \; \varphi(x,y,\overline{p}))]
\end{equation}
\noindent By abuse of notation, we also say that~$L_{\alpha}$ is~$\Sigma_n$-admissible iff~$\alpha$ is~$\Sigma_n$-admissible; and we write ``admissible'' in lieu of ``$\Sigma_1$-admissible.'' The notion of~$\Sigma_n$-admissibility can be described axiomatically as well. In particular, it is not difficult to see that~$L_{\alpha}$ is~$\Sigma_n$-admissible if and only if~$L_{\alpha}$ satisfies extensionality, pairing, union, infinity, the foundation schema,~$\Sigma_n$-collection, and~$\Sigma_{n-1}$-separation. In the case~$n=1$, this set of axioms provides an equivalent axiomatization of Kripke-Platek set theory (\cite{Kripke1964aa}, \cite{Platek1966aa}, \cite{Devlin1984aa} p. 48, p. 36). Further, the union of this set of axioms for all~$n\geq 1$, along with the axiom choice (in the form that every set can be well-ordered), is deductively equivalent to~${\tt ZFC\mbox{-}P}$ (cf. Definition~\ref{defn:ZFCminusP}). Finally, an equivalent definition of~$\Sigma_n$-admissibility is as follows:~$\alpha$ is~$\Sigma_n$-admissible if and only if~$\alpha$ is a limit and~$\alpha>\omega$ and~$L_{\alpha}$ models Kripke-Platek set theory and~$\Sigma_n$-replacement in the strong form that both the graph and range of~$\Sigma_n$-definable functions on sets exists (cf. \cite{Simpson1978aa} p. 368, \cite{Sacks1990} p. 174).

Several of the classical results about Kripke-Platek set theory easily generalize to~$\Sigma_n$-admissibles. In particular, if~$L_{\alpha}$ is~$\Sigma_n$-admissible, then (i)~$L_{\alpha}$ satisfies~$\Delta_n$-separation, (ii)~$L_{\alpha}$ models that the~$\Sigma_n$- and~$\Pi_n$-formulas are uniformly closed under bounded quantification, and (iii)~$L_{\alpha}$ satisfies~$\Sigma_n$-transfinite recursion. For the proofs of these results for the case~$n=1$, see Chapters~I-II of Devlin's book~\cite{Devlin1984aa}; the proofs for the results~$n>1$ carry over word-for-word. An idea closely related to transfinite recursion is Mostowksi Collapse. Since admissible~$L_{\alpha}$ don't necessarily model the Mostowksi Collapse Lemma, it is natural to formulate axioms pertaining directly to the Mostowski Collapse Lemma. In particular, we define:
\begin{defn} $\emph{Axiom\mbox{\;}Beta}$ says that for all sets~$X,R$ such that~$(X,R)$ is well-founded, there is a set~$\pi$ such that~$\pi$ is a function with domain~$X$ satisfying, for each~$y$ from~$X$, the equation~$\pi(y)=\{\pi(y^{\prime}): y^{\prime}\in X \; \& \; y^{\prime} R y\}$ (cf. Barwise~\cite{Barwise1975ab} Definition I.9.5 p. 39). \label{defna:axiombeta}
 \end{defn}
\noindent The set-version of the Mostowksi Collapse Lemma holds in admissible~$L_{\alpha}$ which satisfy Axiom~Beta. The set-version of this lemma states that for all sets~$X,E$ such that~$(X,E)$ is well-founded and extensional, there is a transitive set~$M$ and an isomorphism~$\pi:(X,E)\rightarrow (M,\in)$ (cf. \cite{Kunen1980} pp. 105-106, \cite{Kunen2011aa}, \cite{Kunen2011aa} p. 56~ff, \cite{Jech2003} p. 69). The traditional~${\tt ZFC}$-proof of~$\mathrm{Axiom\mbox{\;}Beta}$ uses~$\Sigma_1$-replacement and~$\Sigma_1$-separation, and so~$L_{\alpha}$ models Axiom~Beta for all~$\Sigma_2$-admissible~$\alpha$.

Other basic properties of the structures~$L_{\alpha}$ relate to its canonical well-ordering~$<_L$. The well-order~$<_L$ may be taken to be given by a canonical formula that is uniformly~$\Delta_1$ in admissible~$L_{\alpha}$. Further, since~$<_L$ is uniformly~$\Delta_1$, we have that this well-order is absolute between various admissible~$L_{\alpha}$. Moreover, one has that the function~$x\mapsto \mathrm{pred}_{<_L}(x)$ is uniformly~$\Delta_1$ in admissibles where we define~$y\in \mathrm{pred}_{<_L}(x)$ iff~$y<_L x$ (cf. Devlin~\cite{Devlin1984aa} pp. 74-75). Finally, just as the~$\Sigma_m$- and~$\Pi_m$-formulas are closed under bounded quantification for~$0\leq m\leq n$ in~$\Sigma_n$-admissibles, so for~$0< m\leq n$ they are closed under~$<_L$-bounding in~$\Sigma_n$-admissibles.

Other important properties of~$\Sigma_n$-admissibles that we shall use are related to uniformization. A structure~$M$ satisfies  {\it~$\utilde{ \Sigma}_n$-uniformization} if for every~$\utilde{\Sigma}_n^{M}$-definable relation~$R\subseteq M\times M$ there is a~$\utilde{\Sigma}_n^{M}$-definable relation~$R^{\prime}\subseteq R$ such that~$M \models \forall \; x \; [(\exists \; y \; R(x,y))\rightarrow (\exists !\;  y \; R^{\prime}(x,y))]$. In this case,~$R^{\prime}$ is called a~$\utilde{\Sigma}_n^{M}$-definable \emph{uniformization} of~$R$. In his famous paper, Jensen showed that admissible~$L_{\alpha}$ are models of~$\utilde{\Sigma}_n$-uniformization \emph{for all~$n\geq 1$} (cf.~\cite{Jensen1972aa} Theorem 3.1 p. 256 and Lemma 2.15 p. 255;~\cite{Devlin1984aa} Theorem 4.5 p. 269). The proof of this theorem is very difficult, and in fact holds for all members~$J_{\alpha}$ of Jensen's alternate hierarchy. However, in what follows we can avoid direct appeal to Jensen's Theorem by appealing to the following weak version, whose elementary proof proceeds by choosing~$<_L$-least witnesses:

\begin{prop}\label{prop:weakuniform} (\emph{Weak Uniformization}) Suppose~$n\geq 1$. If~$L_{\alpha}$ is~$\Sigma_n$-admissible then~$L_{\alpha}$ satisfies~$\utilde{\Sigma}_m$-uniformization for every~$1\leq m\leq n$. Moreover, the parameters in the~$\Sigma_m$-definition of the uniformization~$R^{\prime}$ can be taken to be the same as the parameters in the~$\Sigma_m$-definition of~$R$.
\end{prop}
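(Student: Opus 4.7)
The statement's mention of $<_L$-least witnesses essentially dictates the strategy. Given a $\utilde{\Sigma}_m$-relation $R(x,y)$ with parameters $\overline{p}$, I would unfold $R(x,y) \equiv \exists z \; \psi(x,y,z,\overline{p})$ with $\psi \in \utilde{\Pi}_{m-1}$ (taking $\Pi_0 = \Delta_0$ when $m=1$). Let $\prec$ denote the lexicographic order on $L_\alpha \times L_\alpha$ induced by $<_L$; since $<_L$ is uniformly $\Delta_1$ in admissibles, so is $\prec$. Then define
\begin{equation*}
R'(x,y) \equiv \exists z \; \bigl[\psi(x,y,z,\overline{p}) \wedge \forall (y',z') \prec (y,z) \; \neg \psi(x,y',z',\overline{p})\bigr].
\end{equation*}
The parameters of $R'$ are those of $R$, since $\prec$ is a defined (and parameter-free) relation.

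The core work is the complexity count showing $R' \in \utilde{\Sigma}_m$. Observe that $\neg\psi \in \utilde{\Sigma}_{m-1}$ (promoting $\Delta_0 \subseteq \Sigma_1$ if $m=1$). Since $L_\alpha$ is $\Sigma_n$-admissible with $m \leq n$, the closure properties recalled in \S\ref{sec02.2} ensure that $\utilde{\Sigma}_{m-1}$-formulas are closed under $\prec$-bounded universal quantification, a fact whose underlying ingredient is $\Sigma_{m-1}$-collection. Hence the bracketed clause is of form $\utilde{\Pi}_{m-1} \wedge \utilde{\Sigma}_{m-1}$, which lies in $\utilde{\Sigma}_m$: both $\utilde{\Pi}_{m-1}$ and $\utilde{\Sigma}_{m-1}$ embed in $\utilde{\Sigma}_m$, and $\utilde{\Sigma}_m$ is closed under conjunction (pulling out the leading existentials and invoking closure of $\utilde{\Pi}_{m-1}$ under conjunction inductively). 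Prefixing with $\exists z$ preserves level $\utilde{\Sigma}_m$.

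It remains to check that $R'$ uniformizes $R$. The inclusion $R' \subseteq R$ is immediate from the first conjunct. Conversely, if $L_\alpha \models \exists y \; R(x,y)$, then $\psi(x,y_0,z_0,\overline{p})$ holds for some $(y_0,z_0) \in L_\alpha \times L_\alpha$. By $\Pi_{m-1}$-separation (equivalently $\Sigma_{m-1}$-separation, available via the $\Sigma_{n-1}$-separation built into $\Sigma_n$-admissibility), the set $S = \{(y,z) \preceq (y_0,z_0) : \psi(x,y,z,\overline{p})\}$ exists in $L_\alpha$ and is non-empty. Since $\prec$ well-orders $L_\alpha \times L_\alpha$, the set $S$ has a $\prec$-least element $(y^*,z^*)$, which by minimality in $S$ is in fact the $\prec$-least pair satisfying $\psi(x,\cdot,\cdot,\overline{p})$ anywhere; hence $R'(x,y^*)$ holds, and uniqueness of this $\prec$-least pair delivers uniqueness of $y^*$.

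The main obstacle is the complexity calculation in the second paragraph: one must keep the $\prec$-bounded universal clause at level $\utilde{\Sigma}_{m-1}$, and this hinges precisely on the $\Sigma_{m-1}$-collection content of $\Sigma_n$-admissibility for $m \leq n$. Outside the admissible hypothesis this closure fails, which is exactly why obtaining the full Jensen uniformization (for all $m$ in admissibles, without requiring $\Sigma_m$-admissibility) demands genuinely finer tools than the $<_L$-least-witness trick exploited here.
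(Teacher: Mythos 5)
Your strategy (choose the $<_L$-least witness) is exactly the one the paper intends --- it offers no proof of this proposition beyond the remark that the ``elementary proof proceeds by choosing $<_L$-least witnesses'' --- but your implementation has a genuine complexity bug. The problem is the lexicographic order $\prec$ on $L_{\alpha}\times L_{\alpha}$: the $\prec$-predecessors of a pair $(y,z)$ include all of $\mathrm{pred}_{<_L}(y)\times L_{\alpha}$, which is a proper class from the point of view of $L_{\alpha}$, so ``$\forall\,(y',z')\prec (y,z)$'' is not a bounded quantifier and the closure of $\utilde{\Sigma}_{m-1}$ under $<_L$-bounding does not apply to it. Unpacking it, your second conjunct is equivalent to $[\forall\, y'<_L y\;\forall\, z'\;\neg\psi(x,y',z',\overline{p})]\wedge[\forall\, z'<_L z\;\neg\psi(x,y,z',\overline{p})]$; the second half behaves as you claim, but the first half is $\forall\, y'<_L y\;\neg R(x,y')$, a $<_L$-bounded universal applied to a $\utilde{\Pi}_m$ matrix, hence $\utilde{\Pi}_m$ rather than $\utilde{\Sigma}_{m-1}$. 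As written, your $R'$ is therefore only $\utilde{\Sigma}_{m+1}$, which defeats the point of the proposition; the rest of your verification, though correct, does not repair this.

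The fix is small and preserves your argument: minimize a single set rather than a pair. Define $R'(x,y)$ to say that there is a $w$ such that $w=\langle y,z\rangle$ for some $z$ with $\psi(x,y,z,\overline{p})$, and for all $w'<_L w$, $w'$ is not of the form $\langle y',z'\rangle$ with $\psi(x,y',z',\overline{p})$. The only new quantifier is then $\forall\, w'<_L w$, which ranges over the genuine set $\mathrm{pred}_{<_L}(w)$ (uniformly $\Delta_1$), so the negated matrix stays $\utilde{\Sigma}_{m-1}$ and the whole formula is $\utilde{\Sigma}_m$ in the same parameters $\overline{p}$; your $m=1$ adjustment is handled by rewriting the clause as $\exists\, w''\,(w''=\mathrm{pred}_{<_L}(w)\;\&\;\forall\, w'\in w''\ldots)$, exactly as the paper does for $\ell_i$ in the proof of the Construction Theorem. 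Existence and uniqueness of the $<_L$-least such code $w$ follow simply because $<_L$ genuinely well-orders $L_{\alpha}$ (your detour through $\Pi_{m-1}$-separation is not needed), and the remainder of your verification goes through verbatim.
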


\noindent Let's finally state a simple consequence of uniformization that we shall appeal to repeatedly in what follows:
\begin{prop}\label{invert}
(Proposition on Right-Inverting a Surjection) Suppose that~$n\geq 1$ and that~$L_{\alpha}$ is~$\Sigma_n$-admissible. Suppose that~$Y$ is a~$\utilde{\Sigma}_n^{L_{\alpha}}$-definable subset of~$L_{\alpha}$ and~$X$ is a subset of~$L_{\alpha}$. Suppose there is a~$\utilde{\Sigma}_n^{L_{\alpha}}$-definable surjection~$\pi:Y\rightarrow X$. Then~$X$ is a~$\utilde{\Sigma}_n^{L_{\alpha}}$-definable subset of~$L_{\alpha}$ and there is a~$\utilde{\Sigma}_n^{L_{\alpha}}$-definable injection~$\iota:X\rightarrow Y$ satisfying~$\pi\circ \iota  = \mathrm{id}_X$.
\end{prop}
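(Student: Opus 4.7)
The plan is to derive Proposition~\ref{invert} as a straightforward corollary of Weak Uniformization (Proposition~\ref{prop:weakuniform}) applied to the graph of the given surjection.

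First I would handle the definability of $X$. Since $X = \{x \in L_\alpha : \exists\, y\ (y\in Y \wedge \pi(y)=x)\}$ and both membership in $Y$ and the graph of $\pi$ are $\utilde{\Sigma}_n^{L_\alpha}$-definable (with the same parameters as those used for $Y$ and $\pi$), the displayed formula is an existential quantification over a conjunction of $\utilde{\Sigma}_n$ predicates, hence itself $\utilde{\Sigma}_n$ (the leading block of existentials in $\Sigma_n$ absorbs the new existential).

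Second, I would set up the relation to be uniformized. Define $R \subseteq L_\alpha \times L_\alpha$ by
\begin{equation*}
R(x,y) \;\equiv\; x \in X \;\wedge\; y \in Y \;\wedge\; \pi(y) = x.
\end{equation*}
By the first step and the hypotheses, $R$ is $\utilde{\Sigma}_n^{L_\alpha}$-definable, with parameters taken from those defining $Y, \pi, X$. By surjectivity of $\pi$, we have $L_\alpha \models \forall\, x\in X\ \exists\, y\ R(x,y)$. Now invoke Proposition~\ref{prop:weakuniform} to obtain a $\utilde{\Sigma}_n^{L_\alpha}$-definable $R' \subseteq R$, with the same parameters as $R$, such that for every $x \in X$ there is a unique $y$ with $R'(x,y)$. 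Let $\iota$ be the function whose graph is $R'$; equivalently, $\iota(x)$ is the unique $y$ with $R'(x,y)$.

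Finally I would verify the two required properties. That $\pi \circ \iota = \mathrm{id}_X$ is immediate: for each $x \in X$, $R'(x, \iota(x))$ holds, so $R(x,\iota(x))$ holds, so $\pi(\iota(x)) = x$. For injectivity, if $\iota(x_1) = \iota(x_2)$, applying $\pi$ yields $x_1 = \pi(\iota(x_1)) = \pi(\iota(x_2)) = x_2$. The parameter conclusion is inherited directly from the parameter clause of Proposition~\ref{prop:weakuniform}. There is no real obstacle here: the only thing to be careful about is to route the argument through Weak Uniformization rather than through the much deeper Jensen uniformization theorem, and to check that passing from $\pi$ to its graph and then to $R$ does not raise the definability class beyond $\utilde{\Sigma}_n$, which follows from the standard closure of $\Sigma_n$ under conjunction and existential quantification in $\Sigma_n$-admissibles.
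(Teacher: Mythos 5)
Your proposal is correct and is exactly the argument the paper intends: the paper states Proposition~\ref{invert} as ``a simple consequence of uniformization,'' and your route---define $X$ as the $\utilde{\Sigma}_n$-definable image of $\pi$, uniformize the inverted graph $R(x,y)\equiv y\in Y\wedge\pi(y)=x$ via Weak Uniformization (Proposition~\ref{prop:weakuniform}), and read off $\iota$ with $\pi\circ\iota=\mathrm{id}_X$ and injectivity---is that consequence spelled out. No gaps.
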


An important concept in what follows is the~$n$-th projectum of the structure~$L_{\alpha}$. This was introduced by Kripke (\cite{Kripke1964aa}) and Platek (\cite{Platek1966aa}), and it  records how small one can possibly make~$\alpha$ under a~$\Sigma_n$-definable injection:
\begin{defn}\label{defn:project}
Suppose that~$n> 0$ and~$\alpha>\omega$. Then the {\it~$n$-th projectum}~$\rho_n(\alpha)=\rho_n$ of~$\alpha$ is the least~$\rho\leq \alpha$ such that there is a~$\utilde{\Sigma}_n^{L_{\alpha}}$-definable injection~$\iota:\alpha\rightarrow \rho$.
\end{defn}
\noindent There are several different equivalent characterizations of the~$n$-th projectum (cf. \cite{Sacks1990} p. 157, \cite{Barwise1975ab} Definition V.6.1 p. 174, \cite{Jensen1972aa} pp. 256-257, \cite{Schindler2010aa} Definition 2.1 p. 619). In particular, for admissible~$\alpha$, the~$n$-th projectum may be equivalently defined as the  smallest~$\rho\leq \alpha$ such that there is a~$\utilde{\Sigma}_n^{L_{\alpha}}$-definable injection~$\iota:L_{\alpha}\rightarrow \rho$. 

Another basic tool that we employ is the notion of a~$\Sigma_n$-elementary substructure. Recall that if~$M$ and~$N$ are structures in the signature of~${\tt ZFC}$, then~$M\prec_n N$ is said to hold, and~$M$ is said to be a~$\Sigma_n$-\emph{elementary substructure} of~$N$, if~$M\subseteq N$ and for every~$\Sigma_n$-formula~$\varphi(\overline{x})$ and every tuple of parameters~$\overline{a}$ from~$M$, it is the case that~$M\models \varphi(\overline{a})$ if and only if~$N\models \varphi(\overline{a})$.  Here are some basic facts about~$\Sigma_n$-elementary substructures and the constructible hierarchy that we shall use:
\begin{prop}\label{prop:whatami234123}
\begin{enumerate}
\item[] 
\item \emph{The~$\Sigma_n$-Definable Closure is a~$\Sigma_n$-Elementary Substructure}: Suppose that~$L_{\alpha}$ is~$\Sigma_n$-admissible and~$A\subseteq L_{\alpha}$. Let \emph{the~$\Sigma_n$-definable closure of~$L_{\alpha}$ with parameters~$A$}, written~$\mathrm{dcl}^{L_{\alpha}}_{\Sigma_n}(A)$, denote the set of elements~$a$ of~$L_{\alpha}$ such that there is a~$\Sigma_n$-formula~$\varphi(x,\overline{y})$ with all free variables displayed and parameters~$\overline{p}\in A$ such that~$L_{\alpha}\models \varphi(a,\overline{p}) \wedge \forall \; a^{\prime} \; (\varphi(a^{\prime},\overline{p})\rightarrow a=a^{\prime})$. Then~$\mathrm{dcl}^{L_{\alpha}}_{\Sigma_n}(A)\prec_n L_{\alpha}$. \label{eqn:denclos}

\item If~$\kappa$ is an uncountable regular cardinal, then~$L_{\kappa}$ is a model of~${\tt ZFC}\mbox{-}{\tt P}$ (cf. Definition~\ref{defn:ZFCminusP}). \label{eqn:helper1}

\item \emph{Admissibility and Axiom Beta Preserved Under Elementary Substructure}: Suppose that~$n\geq 1$ and that~$L_{\alpha}\prec_n L_{\beta}$ where~$\beta$ is~$\Sigma_n$-admissible. Then~$\alpha$ is~$\Sigma_n$-admissible. Further, if~$L_{\beta}\models \mathrm{Axiom\mbox{\;}Beta}$ then~$L_{\alpha}\models \mathrm{Axiom\mbox{\;}Beta}$. \label{simpelthings}

\item \emph{Consequence of~${\tt V\hspace{-1mm}=\hspace{-1mm}L}$ for~$\Sigma_1$-Substructures of~$L$ up to a Successor Cardinal}: Suppose that~${\tt V\hspace{-1mm}=\hspace{-1mm}L}$ and~$\lambda$ is an infinite cardinal and~$\lambda\cup \{\lambda\}\subseteq M$,~$M\prec_1 L_{\lambda^{+}}$,~$\left|M\right|=\lambda$. Then~$M=L_{\gamma}$ for some~$\gamma$ with~$\left|\gamma\right|= \lambda$.\label{eqn:genpop}
\end{enumerate}
\end{prop}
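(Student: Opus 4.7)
The plan is to prove the four parts of Proposition~\ref{prop:whatami234123} sequentially, with the main tools being Weak Uniformization, the closure properties of $\Sigma_n$-formulas in $\Sigma_n$-admissibles, and G\"odel's condensation argument.

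For Part 1, I would establish $\mathrm{dcl}^{L_\alpha}_{\Sigma_n}(A) \prec_n L_\alpha$ by induction on $k \leq n$, verifying at each step the $\Sigma_k$-Tarski--Vaught criterion: given a $\Sigma_k$-formula $\varphi(x, \overline{y})$ and parameters $\overline{a}$ in the closure with $L_\alpha \models \exists x \; \varphi(x, \overline{a})$, Weak Uniformization (Proposition~\ref{prop:weakuniform}) supplies a $\utilde{\Sigma}_k^{L_\alpha}$-definable partial function $F$ producing a witness $F(\overline{a})$; composing with the unique $\Sigma_n$-definitions of the $a_i$'s from parameters in $A$ yields a $\Sigma_n$-definition of $F(\overline{a})$ from parameters in $A$, placing this witness in the closure. (Crucially, we use that $F$'s parameters can be absorbed into $A$, since Proposition~\ref{prop:weakuniform} says the uniformization uses the same parameters as $\varphi$.)

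Part 2 is standard G\"odelian material: extensionality, pairing, union, infinity, foundation, and separation are immediate for $L_\kappa$ with $\kappa$ regular uncountable (each definable subset of an element of $L_\kappa$ appears at some level $L_\alpha$ with $\alpha < \kappa$); regularity of $\kappa$ then yields collection; and the canonical well-order $<_L$ delivers choice. For Part 3, I would re-express each axiom in a form amenable to $\Sigma_n$-elementarity. For $\Sigma_n$-collection: if $L_\alpha \models \forall x \exists y \; \varphi(x,y,\overline{p})$ with $u \in L_\alpha$, then for each $x \in u$ the $\Sigma_n$-statement $\exists y \; \varphi$ lifts upward to $L_\beta$; $\Sigma_n$-collection in $L_\beta$ provides $\exists v \; \forall x \in u \exists y \in v \; \varphi$, which is $\Sigma_n$ (using bounded-quantifier closure in $\Sigma_n$-admissibles) and reflects back down. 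For $\Sigma_{n-1}$-separation: rewrite as $\exists z (z = \{y \in x : \varphi(y)\})$, whose inner equality is a conjunction of a bounded $\Sigma_{n-1}$- and a bounded $\Pi_{n-1}$-formula, hence $\Delta_n$; the whole axiom is then $\Sigma_n$ and so preserved. For Axiom Beta: well-foundedness of $(X,R) \in L_\alpha$ is $\Pi_1$ and lifts upward to $L_\beta$; the existence of the Mostowski collapse in $L_\beta$ is $\Sigma_1$ and reflects back to $L_\alpha$.

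Part 4 is the main obstacle and proceeds in two stages: establish transitivity of $M$, then invoke condensation. Since $L_{\lambda^+} \models {\tt V\hspace{-1mm}=\hspace{-1mm}L}$, every element of $L_{\lambda^+}$ has cardinality at most $\lambda$; so for each $x \in M$, the $<_L$-least pair $(\alpha, f)$ with $\alpha \leq \lambda$ and $f : \alpha \to x$ a surjection is $\Sigma_1$-definable from $x$ and $\lambda$, and $\Sigma_1$-elementarity places $(\alpha, f)$ in $M$. For each $\beta < \alpha \leq \lambda$ we have $\beta \in M$ since $\lambda \subseteq M$, and $\Sigma_1$-elementarity applied to the $\Sigma_1$-formula ``$\exists y \; (\beta,y) \in f$'' yields $f(\beta) \in M$; hence $x \subseteq M$, so $M$ is transitive. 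Next I re-run the condensation argument: for each $x \in M$, $\Sigma_1$-elementarity applied to $\exists \alpha (x \in L_\alpha)$ produces an ordinal $\alpha \in M$ with $x \in L_\alpha$, so $M \subseteq L_\gamma$ where $\gamma = M \cap \mathrm{Ord}$; conversely, for each $\beta < \gamma$, the set $L_\beta$ lies in $M$ by $\Sigma_1$-elementarity applied to ``$\exists y \; (y = L_\beta)$,'' and transitivity of $M$ yields $L_\gamma \subseteq M$. Thus $M = L_\gamma$, and $\lambda \leq |\gamma| \leq |L_\gamma| = |M| = \lambda$ gives $|\gamma| = \lambda$. The principal difficulties are verifying that the Skolem-type functions locating $f$ and $f(\beta)$ inside $M$ genuinely stay at $\Sigma_1$-complexity, and carefully re-deriving enough of the condensation argument to fit the hypotheses on hand rather than quoting the full lemma as a black box.
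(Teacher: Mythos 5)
Your proposal is correct and follows essentially the same route as the paper, which simply defers each item to the standard literature (Devlin's Lemmas II.5.3 and II.5.10, the Kunen/Jech treatment of $L_{\kappa}$, and a ``routine induction'' for item 3): your $<_L$-least-witness Skolem/uniformization argument for item 1, the $H_{\lambda^{+}}=L_{\lambda^{+}}$ plus condensation argument for item 4, and the level-by-level transfer for item 3 are exactly the proofs those citations contain. The only points needing a touch more care in a full write-up are the bootstrapping order in item 3 (the bounded-quantifier normal forms you invoke must first be secured in $L_{\alpha}$ itself, starting from the $\Sigma_1$ level) and the $\Delta_0$/atomic base case of the Tarski--Vaught induction in item 1, but both are routine and your outline identifies all the essential moves.
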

\begin{proof}
For the first item, see  Devlin~\cite{Devlin1984aa} Lemma II.5.3 p. 83 which proves the result for~$n=\omega$; the same proof works for~$1\leq n<\omega$. For the next item on ${\tt ZFC}\mbox{-}{\tt P}$, see ~\cite{Kunen1980} p. 177,~\cite{Kunen2011aa} Lemma II.6.22 p. 139, ~\cite{Jech2003} p. 198, noting that these same proofs also give one the collection schema from the official definition of  ${\tt ZFC}\mbox{-}{\tt P}$ (cf. Definition~\ref{defn:ZFCminusP}). The proof of the third item follows from a routine induction. For the fourth and final item, see Devlin~\cite{Devlin1984aa} Lemma II.5.10 p. 85 for the special case~$\lambda =\omega$; the same proof works for the general case.
\end{proof}

\section{Construction and Existence Theorems, and Joint Consistency Problem}\label{sec04.5}

The aim of this section is to build models of abstraction principles with predicative amounts of comprehension, and these yield our solution to the joint consistency problem described at the close of  \S\ref{sec0.1555predab}. The first step is the following construction. This construction is also an important part of the proof of our Main Theorem~\ref{thm:main}, whose proof is presented in the next section \S\ref{sec05}. In the statement of this construction theorem, the key concepts of~$\Sigma_n$-admissible and~$n$-th projectum~$\rho_n$ were defined in the previous section \S\ref{sec02.2}. Likewise, recall that the theories~${\tt SO}$ and~${\tt \Sigma^1_1\mbox{-}[E_1, \ldots, E_k]A}$ were defined respectively in Definition~\ref{eqn:secondorderlogic} and Definition~\ref{predabstracdefn} from \S\ref{sec0.1555predab}.

\begin{thm}\label{thm:BCT} (\emph{Construction Theorem}). Suppose that~$n\geq 1$ and that~$\alpha$ is~$\Sigma_n$-admissible with~$\rho_n(\alpha)=\rho<\alpha$ and let~$\iota:L_{\alpha}\rightarrow \rho$ be a witnessing~${\utilde \Sigma}_n^{L_{\alpha}}$-definable injection. Then consider the following structure~$\mathcal{M}$ in the signature of~${\tt SO}$~(cf. Definition~\ref{eqn:secondorderlogic}):
\begin{equation}\label{eqn:555666}
\mathcal{M} = (\rho, P(\rho)\cap L_{\alpha}, P(\rho\times \rho)\cap L_{\alpha}, \ldots)
\end{equation}
Further, suppose for each~$i\in [1,k]$, the relation~$E_i$ is a~${\utilde \Sigma}^{1, \mathcal{M}}_{n\mbox{-}1}$-definable equivalence relation on~$(P(\rho^{m_i})\cap L_{\alpha})$. 

Then consider the~${\utilde \Sigma}_{n}^{L_{\alpha}}$-definable maps~$\partial_{E_i}:(P(\rho^{m_i})\cap L_{\alpha})\rightarrow \rho$ defined by~$\partial_{E_i}(X)=\iota(\ell_i(X))$ where~$\ell_i(X)$ is the~$<_L$-least member of~$X$'s~$E_i$-equivalence class. Then the following expansion of~$\mathcal{M}$ is a model of the theory~${\tt \Sigma^1_1\mbox{-}[E_1, \ldots, E_k]A}+{\tt GC}$, where the global well-order~$<$ on objects is given by the membership relation on the ordinal~$\rho$:
\begin{equation}\label{eqn:thestructure}
\mathcal{N}=(\rho, P(\rho)\cap L_{\alpha}, P(\rho\times \rho)\cap L_{\alpha}, \ldots, \partial_{E_1}, \ldots, \partial_{E_k},<)
\end{equation}
\end{thm}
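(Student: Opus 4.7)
The verification that $\mathcal{N}$ models ${\tt \Sigma^1_1\mbox{-}[E_1, \ldots, E_k]A}+{\tt GC}$ has four components, two of which are immediate. Each abstraction principle $A[E_i]$ follows at once from the construction: since $\ell_i$ returns the $<_L$-least element of its input's $E_i$-class and $\iota$ is injective, $\partial_{E_i}(R)=\partial_{E_i}(S)$ is equivalent to $\ell_i(R)=\ell_i(S)$, which is equivalent to $E_i(R,S)$. Global choice is equally immediate: the relation $<$ interpreting the global well-order is the membership relation on the ordinal $\rho$, whose graph is $\Delta_0$-definable and hence a first-order concept in $P(\rho^2)\cap L_\alpha$ once first-order comprehension has been verified; every nonempty definable subset of $\rho$ then has a $<$-least element by well-foundedness of $\in$ on ordinals.

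The substantive work lies in verifying first-order comprehension and $\Sigma^1_1$-choice, and the pivot is a complexity calculation for the graph of $\partial_{E_i}$ over $L_\alpha$. A concept quantifier in $\mathcal{M}$ over $P(\rho^k)\cap L_\alpha$ translates into an unbounded first-order quantifier over $L_\alpha$ restricted by the $\Delta_0$ condition of being a subset of $\rho^k$, so the hypothesis that $E_i$ is $\utilde{\Sigma}^{1,\mathcal{M}}_{n-1}$-definable delivers a $\utilde{\Sigma}_{n-1}^{L_\alpha}$-definition of $E_i$. Conjoining this with the $<_L$-bounded clause defining $\ell_i$, and using that $\Sigma_m$ and $\Pi_m$ are closed under $<_L$-bounded quantification in $\Sigma_n$-admissibles for $1\leq m\leq n$, and then composing with the $\utilde{\Sigma}_n^{L_\alpha}$-definable injection $\iota$, yields a $\utilde{\Sigma}_n^{L_\alpha}$-definable graph for $\partial_{E_i}$; since $\partial_{E_i}$ is total with range inside $\rho\in L_\alpha$, functionality together with boundedness of the range lifts this graph to $\utilde{\Delta}_n^{L_\alpha}$. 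First-order comprehension then follows by induction on formula complexity: predication atoms are $\Delta_0$, atoms involving $\partial_{E_i}$ are $\utilde{\Delta}_n^{L_\alpha}$, and first-order quantifiers over $\rho$ are bounded and so preserve $\Sigma_n$ and $\Pi_n$ in $\Sigma_n$-admissibles, making each first-order formula $\utilde{\Delta}_n^{L_\alpha}$-definable; $\Delta_n$-separation then places the defined subset of $\rho^m$ in $P(\rho^m)\cap L_\alpha$.

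For $\Sigma^1_1$-choice, the same translation shows that a $\Sigma^1_1$-formula in the expanded signature gives a $\utilde{\Sigma}_n^{L_\alpha}$-definable relation between tuples from $\rho$ and elements of $P(\rho^m)\cap L_\alpha$. Under the hypothesis $\forall \bar{x}\,\exists R'\,\varphi(\bar{x},R')$, I invoke Weak Uniformization (Proposition~\ref{prop:weakuniform}) to extract a $\utilde{\Sigma}_n^{L_\alpha}$-definable choice function $g$ satisfying $\varphi(\bar{x},g(\bar{x}))$ throughout $\rho^k$; the witness $R=\{(\bar{x},\bar{y})\in\rho^{k+m}:\bar{y}\in g(\bar{x})\}$ is then $\utilde{\Delta}_n^{L_\alpha}$-definable (again by functionality of $g$) and so lies in $P(\rho^{k+m})\cap L_\alpha$ by $\Delta_n$-separation, witnessing the compact form of $\Sigma^1_1$-choice in equation~(\ref{eqn:LATE:formchoice2}).

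The principal obstacle throughout is complexity bookkeeping: one must confirm that translating $\utilde{\Sigma}^{1,\mathcal{M}}_{n-1}$ into $\utilde{\Sigma}_{n-1}^{L_\alpha}$, then adjoining the $<_L$-bounded clause and composing with $\iota$, lands exactly at $\utilde{\Sigma}_n^{L_\alpha}$ rather than overshooting. Only then does the $\Delta_n$-separation available in the $\Sigma_n$-admissible $L_\alpha$ suffice for both schemata, and only then is the upgrade of $\partial_{E_i}$'s graph from $\Sigma_n$ to $\Delta_n$ via bounded existential search over $\rho$ valid. Once this budget balances, the remainder of the argument is routine.
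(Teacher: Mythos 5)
Your proposal is correct and follows essentially the same route as the paper's proof: the same verification of $A[E_i]$ via injectivity of $\iota$ and $<_L$-least representatives, the same complexity bookkeeping showing that the graph of $\partial_{E_i}$ is $\utilde{\Sigma}_n^{L_\alpha}$ (upgraded to $\utilde{\Delta}_n^{L_\alpha}$ by bounded existential search over $\rho$), the same induction translating $\utilde{\Sigma}^{1,\mathcal{N}}_1$-definability into $\utilde{\Sigma}_n^{L_\alpha}$-definability so that first-order comprehension reduces to $\Delta_n$-separation, and the same appeal to Weak Uniformization for $\Sigma^1_1$-choice. The only cosmetic difference is that you place the choice witness in $L_\alpha$ by $\Delta_n$-separation where the paper uses $\Sigma_n$-replacement, and you elide the unnesting of atomic terms and the $n=1$ case of $<_L$-bounding, both of which the paper treats explicitly but which are routine.
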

\begin{proof}
For each~$i\in [1,k]$, define
\begin{equation}
\widehat{E}_i = \{(X,Y)\in (P(\rho^{m_i})\cap L_{\alpha})\times (P(\rho^{m_i})\cap L_{\alpha}): \mathcal{M}\models E_i(X,Y)\}
\end{equation}
Then since~$E_i$ is~${\utilde \Sigma}^{1, \mathcal{M}}_{n\mbox{-}1}$-definable, it follows that~$\widehat{E}_i$ is~${\utilde \Sigma}^{L_{\alpha}}_{n\mbox{-}1}$-definable, so that~$\widehat{E}_i$ is a~${\utilde \Sigma}^{L_{\alpha}}_{n\mbox{-}1}$-definable equivalence relation on~$(P(\rho^{m_i})\cap L_{\alpha})$. For each element~$X$ of~$(P(\rho^{m_i})\cap L_{\alpha})$, let~$[X]_{\widehat{E}_i}\subseteq (P(\rho^{m_i})\cap L_{\alpha})$ denote the~$\widehat{E}_i$-equivalence class of~$X$. Then~$\ell_i:(P(\rho^{m_i})\cap L_{\alpha})\rightarrow (P(\rho^{m_i})\cap L_{\alpha})$ is defined by~$\ell_i(X)=\min_{<_L}([X]_{\widehat{E}_i})$, and its graph has the following definition:
\begin{align}
\ell_i(X)=Y \Longleftrightarrow X,Y\in L_{\alpha} & \;\&\; \; X,Y\subseteq \rho^{m_i} \; \& \; \widehat{E}_i(X,Y)\label{defnLekki}\\
&   \; \& \; \forall Z<_L Y \; [Z\subseteq \rho^{m_i}\rightarrow \neg \widehat{E}_i(X,Z)] \notag
\end{align}
Since adding quantifiers bounded by~$<_L$ to~$\Sigma_m$- or~$\Pi_m$-formulas for~$m\leq n$ does not increase their complexity, we have that the graph of~$\ell_i$ is defined by the conjunction of a~${\utilde \Sigma}_{n\mbox{-}1}^{L_{\alpha}}$-formula with a~${\utilde \Pi}_{n\mbox{-}1}^{L_{\alpha}}$-formula and so is~${\utilde \Sigma}_{n}^{L_{\alpha}}$-definable. Then the map~$\partial_{E_i}: (P(\rho^{m_i})\cap L_{\alpha})\rightarrow \rho$ is defined by~$\partial_{E_i}(X) = \iota(\ell_i(X))$, which is likewise~${\utilde \Sigma}_{n}^{L_{\alpha}}$-definable since it is the composition of two~${\utilde \Sigma}_{n}^{L_{\alpha}}$-definable functions. (Note that in the case~$n=1$, the function~$\ell_i$ is defined by the conjunction of a~${\utilde \Sigma}_{0}^{L_{\alpha}}$-formula with a~${\utilde \Sigma}_{1}^{L_{\alpha}}$-formula and so is~${\utilde \Sigma}_{1}^{L_{\alpha}}$-definable. For, the formula~$\forall \; Z<_L Y \; \theta(Z,Y)$ for any~${\utilde \Sigma}_{0}^{L_{\alpha}}$-definable~$\theta(Z,Y)$ is equivalent to the formula $\exists \; Y^{\prime}  \; Y^{\prime}=\mathrm{pred}_{<_L}(Y) \; \& \; \forall \; Z\in Y^{\prime} \; \theta(Z,Y)$ which is~${\utilde \Sigma}_{1}^{L_{\alpha}}$-definable because the map~$Y\mapsto \mathrm{pred}_{<_L}(Y)$ is~${\utilde \Delta}_{1}^{L_{\alpha}}$-definable). 

Now let us argue that the so-defined structure~$\mathcal{N}$ from equation~(\ref{eqn:thestructure}) satisfies the abstraction principle~$A[E_i]$~(\ref{defnAe}). First suppose that~$\mathcal{N}\models \partial_{E_i}(X)=\partial_{E_i}(Y)$ for some~$X,Y\in (P(\rho^{m_i})\cap L_{\alpha})$. Then since~$\iota:L_{\alpha}\rightarrow \rho$ is an injection we have that~$\ell_i(X)=\ell_i(Y)$, so that~$\min_{<_L}([X]_{\widehat{E}_i})=\min_{<_L}([Y]_{\widehat{E}_i})$. Hence~$\widehat{E}_i(X,Y)$ so that~$\mathcal{M}\models E_i(X,Y)$ and hence its expansion~$\mathcal{N}$ also models this. Conversely, suppose that~$\mathcal{N}\models E_i(X,Y)$, so that its reduct~$\mathcal{M}$ also models this. Then~$\widehat{E}_i(X,Y)$ and hence~$[X]_{\widehat{E}_i}= [Y]_{\widehat{E}_i}$ and~$\min_{<_L}([X]_{\widehat{E}_i}) = \min_{<_L}([Y]_{\widehat{E}_i})$, so that~$\ell_i(X)=\ell_i(Y)$ and hence~$\partial_{E_i}(X)=\partial_{E_i}(Y)$. Hence in fact the structure~$\mathcal{N}$ from equation~(\ref{eqn:thestructure}) satisfies the abstraction principle~$A[E_i]$.

So now it remains to show that the structure~$\mathcal{N}$ from equation~(\ref{eqn:thestructure}) satisfies the First-Order Comprehension Schema (cf. Definition~\ref{pred:comp:schema}) and the~$\Sigma^1_1$-Choice Schema (cf. Definition~\ref{sigam11choice}) in the expanded signature containing the function symbols~$\partial_{E_1}, \ldots, \partial_{E_k}$. First let us establish a preliminary result that any $\utilde{\Sigma}^{1,\mathcal{N}}_1$-definable subset of $\mathcal{N}$ is $\utilde{\Sigma}^{L_{\alpha}}_n$-definable (indeed, in the same parameters). This result is proven by induction on the complexity of the formula defining the subset of $\mathcal{N}$. By a subset of the many-sorted structure~$\mathcal{N}$, we mean any subset of any finite product $S_1\times \cdots \times S_n$, wherein $S_i$ is one of the sorts $\rho, P(\rho)\cap L_{\alpha}, P(\rho\times \rho)\cap L_{\alpha}, \ldots$ of the structure~$\mathcal{N}$ as displayed in equation~(\ref{eqn:thestructure}). So our preliminary result establishes not only that $\utilde{\Sigma}^{1,\mathcal{N}}_1$-definable subsets of the first-order part $\rho$ are $\utilde{\Sigma}^{L_{\alpha}}_n$-definable, but also that e.g. $\utilde{\Sigma}^{1,\mathcal{N}}_1$-definable subsets of $\rho\times (P(\rho)\cap L_{\alpha})$ are $\utilde{\Sigma}^{L_{\alpha}}_n$-definable.

As a base case, we show that any subset of $\mathcal{N}$ defined by an atomic formula is $\utilde{\Delta}^{L_{\alpha}}_n$-definable. Recall that an \emph{unnested} atomic formula in a signature is one of the form $x=y$, $c=y$, $f(\overline{x})=y$ or $P\overline{x}$, where $c, f, P$ are respectively constant symbols, function symbols, and relation symbols of the signature (cf. \cite{Hodges1993aa} p. 58). Then in $\mathcal{N}$, any atomic formula $\varphi$ is equivalent to both a $\utilde{\Sigma}^{1, \mathcal{N}}_1$-formula~$\varphi^{\exists}\equiv \exists \; \overline{R} \; \exists \; \overline{y} \; \varphi^{\exists}_0$ and a $\utilde{\Pi}^{1, \mathcal{N}}_1$-formula $\varphi^{\forall}\equiv \forall \; \overline{R} \; \forall \; \overline{y} \; \varphi^{\forall}_0$ in which $\varphi^{\exists}_0$ and $\varphi^{\forall}_0$ are quantifer-free and in which any atomic subformula of them is unnested (cf. \cite{Hodges1993aa} Theorem 2.6.1 p. 58). Now, the unnested atomics of the signature of $\mathcal{N}$ are $\partial_{E_i}(R)=x$, $R[\overline{x}]=S$, $R\overline{y}$, and $x=y$. The first is by construction $\utilde{\Sigma}^{L_{\alpha}}_n$-definable and the last three are trivially $\utilde{\Sigma}^{L_{\alpha}}_n$-definable. Further, their negations are likewise $\utilde{\Sigma}^{L_{\alpha}}_n$-definable: for instance, $\partial_{E_i}(R)\neq x$ iff $\exists \; y\in \rho \; (\partial_{E_i}(R)=y \; \& \; y\neq x)$, and $\Sigma_n$-formulas are closed under bounded quantification in $L_{\alpha}$. Now, without loss of generality, we may assume that the negations in $\varphi^{\exists}_0$ and $\varphi^{\forall}_0$ are all pushed to the inside, so that they apply only to unnested atomics. Then since the $\Sigma_n$-formulas are closed under finite union and intersection in $L_{\alpha}$, we have that each subformula of $\varphi^{\exists}_0$ and $\varphi^{\forall}_0$ is $\utilde{\Sigma}^{L_{\alpha}}_n$-definable and thus so are they themselves. By the same reasoning, this holds true for their negations  $\neg \varphi^{\exists}_0$ and $\neg \varphi^{\forall}_0$ as well. Since $\varphi^{\exists}$ is formed from $\varphi^{\exists}_0$ by adding a single block of existential quantifiers, we have that $\varphi^{\exists}$ is $\utilde{\Sigma}^{L_{\alpha}}_n$-definable. Since $\neg (\varphi^{\forall})$ is formed from $\neg (\varphi^{\forall}_0)$ by adding a single block of existential quantifiers, we have that $\neg (\varphi^{\forall})$ is $\utilde{\Sigma}^{L_{\alpha}}_n$-definable, so that  $ \varphi^{\forall}$ is $\utilde{\Pi}^{L_{\alpha}}_n$-definable. Since the original atomic formula~$\varphi$ is equivalent to both  $\varphi^{\exists}$ and $\varphi^{\forall}$, we have that the original atomic formula~$\varphi$ is indeed $\utilde{\Delta}^{L_{\alpha}}_n$-definable.

Then we show that any $\utilde{\Sigma}^{1,\mathcal{N}}_1$-definable subset of $\mathcal{N}$ is $\utilde{\Sigma}^{L_{\alpha}}_n$-definable by a straightforward induction. We may again assume that all the negations are pushed to the inside and apply only to atomics. And the atomics and negated atomics are  $\utilde{\Sigma}^{L_{\alpha}}_n$-definable by the previous paragraph. Since  $\utilde{\Sigma}^{L_{\alpha}}_n$-definability is closed under finite union and intersection, the induction steps for conjunction and disjunction are trivial. Likewise, the induction steps for first-order quantification hold because first-order quantification in $\mathcal{N}$ corresponds to bounded quantification over elements of $\rho$ in $L_{\alpha}$, and the $\Sigma_n$-formulas are closed under bounded quantification in $L_{\alpha}$. So the inductive argument up this point establishes the result for formulas with no higher-order quantifiers. But since we're restricting to the case of $\Sigma^{1,\mathcal{N}}_1$-definable subsets, the addition of a block of higher-order existential quantifiers ranging over $P(\rho^k)\cap L(\alpha)$ does not bring us out of the complexity class~$\utilde{\Sigma}^{L_{\alpha}}_n$.

Hence indeed any $\utilde{\Sigma}^{1,\mathcal{N}}_1$-definable subset of $\mathcal{N}$ is $\utilde{\Sigma}^{L_{\alpha}}_n$-definable. From this, the First-Order Comprehension Schema (cf. Definition~\ref{pred:comp:schema}) follows directly from $\Delta_n$-separation in $L_{\alpha}$. As for the $\Sigma^1_1$-choice schema, suppose that~$\mathcal{N} \models \forall \; x \; \exists \; R \; \varphi(x,R)$, wherein~$\varphi$ is~$\Sigma^1_1$. Choose a $\utilde{\Sigma}_n$-formula $\psi$ such that for all $x\in \rho$ and $R\in (P(\rho)\cap L_{\alpha})$ one has $\mathcal{N}\models \varphi(x,R)$ iff $L_{\alpha}\models \psi(x,R)$. Then one has that $L_{\alpha}\models \forall x\in \rho \; \exists \; R\subseteq \rho \; \psi(x,R)$. Define $\Gamma(x,R)\equiv [x\in \rho \; \& \; R\in (P(\rho)\cap L_{\alpha}) \; \& \; L_{\alpha}\models \psi(x,R)]$. By weak uniformization~(\ref{prop:weakuniform}), choose a $\Sigma^{L_{\alpha}}_n$-definable uniformization $\Gamma^{\prime}$ of $\Gamma$. Since $\Gamma^{\prime}:\rho\rightarrow (P(\rho)\cap L_{\alpha})$, it follows from~$\Sigma_n$-replacement that its graph is an element of~$L_{\alpha}$, and obviously we have  $L_{\alpha}\models \forall \; x\in \rho \; \psi(x,\Gamma^{\prime}(x))$. Then define $R^{\prime}xy$ if and only if $y\in \Gamma^{\prime}(x)$, so that $R^{\prime}\in (P(\rho\times \rho)\cap L_{\alpha})$ and $R^{\prime}[x]=\Gamma(x)$. Then one has  $L_{\alpha}\models \forall \;x\in \rho \; \psi(x,R^{\prime}[x])$ and hence $\mathcal{N}\models \varphi(x,R^{\prime}[x])$.

As for the global choice principle~${\tt GC}$, we may briefly note that~$\mathcal{N}$ obviously satisfies it when we use the ordinary ordering~$<$ on the ordinal~$\rho$ as the witness. For, since the ordering~$<$ on~$\rho$ is~$\Delta_0$-definable, it exists in~$P(\rho\times \rho)\cap L_{\alpha}$ by~$\Delta_0$-separation on the set~$\rho\times \rho$ in~$L_{\alpha}$. In the previous paragraphs, we have verified that various forms of comprehension hold on~$\mathcal{N}$, in which parameters are allowed to occur. Hence these forms of comprehension continue to hold when~$<$ is permitted to occur within the formulas because we can view this as simply yet another parameter.
\end{proof}

\begin{thm}\label{thm:basicexistence} (\emph{Existence Theorem}).
Let~$\gamma\geq 0$ and let~$\lambda = \omega_{\gamma}^L$ and~$\kappa=\omega_{\gamma+1}^L$. Then for each~$n\geq 1$ there is an~$\Sigma_n$-admissible~$\alpha_n$ such that
\begin{equation}
\lambda<\alpha_n<\kappa, \hspace{5mm} \rho_n(\alpha_n)=\lambda, \hspace{5mm} L_{\alpha_n}\prec_n L_{\kappa}, \hspace{5mm} L_{\alpha_n}\models \mathrm{Axiom\;Beta}
\end{equation}
More specifically, we can choose~$\alpha_n$ so that~$L_{\alpha_n} =\mathrm{dcl}_{\Sigma_n}^{L_{\kappa}}(\lambda \cup \{\lambda\})$. Further, the following set~$\mathcal{F}_n\subseteq \lambda$ is~$\Sigma_1^{L_{\alpha_n}}$-definable, wherein~$\langle \cdot, \cdot\rangle:\lambda \times \lambda \rightarrow \lambda$ is G\"odel's~$\Sigma_1$-definable pairing function and~$\mathrm{Form}(\Sigma_n)$ is the set of G\"odel numbers of~$\Sigma_n$-formulas:
\begin{equation}\label{eqn:defnbigO}
\mathcal{F}_n = \{\langle \ulcorner \varphi(x, \overline{y},z)\urcorner, \overline{\beta}\rangle: \varphi(x,\overline{y},z)\in \mathrm{Form}(\Sigma_n) \; \& \; \overline{\beta} <\lambda\}
\end{equation}
Moreover, there is a~$\utilde{\Sigma}^{L_{\alpha_n}}_n$ surjective partial map~$\theta_n:\mathcal{F}_n \dashrightarrow L_{\alpha_n}$ such that
\begin{align}
 \theta_n(\langle \ulcorner \varphi(x, \overline{y},z)\urcorner, \overline{\beta}\rangle) =a  \Longrightarrow &   L_{\alpha_n} \models \varphi(a,\overline{\beta}, \lambda) \label{eqn:idoall} \\
(L_{\alpha_n} \models \exists \; ! \; x\; \varphi(x,\overline{\beta}, \lambda))& \Longrightarrow    \langle \ulcorner \varphi(x, \overline{y},z)\urcorner, \overline{\beta}\rangle\in \mathrm{dom}(\theta_n) \nonumber
\end{align}
and a~$\utilde{\Sigma}^{L_{\alpha_n}}_n$-definable injection~$\iota_n: L_{\alpha_n}\rightarrow \mathrm{dom}(\theta_n)$ such that~$\theta_n \circ \iota_n$ is the identity on~$L_{\alpha_n}$. Further, the sequence~$\alpha_n$ is strictly increasing. Finally, for each~$n\geq 1$ there is an injection~$\chi_n:\lambda\rightarrow \theta^{-1}_n(\{0,1\})$ whose graph is in~$L_{\alpha_n}$.
\end{thm}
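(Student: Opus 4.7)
The plan is to construct $\alpha_n$ exactly as the theorem's parenthetical suggests, namely by setting $L_{\alpha_n} = \mathrm{dcl}^{L_\kappa}_{\Sigma_n}(\lambda \cup \{\lambda\})$, and then deriving each asserted property in turn. Since $\kappa = \omega_{\gamma+1}^L$ is an uncountable regular cardinal in $L$, Proposition~\ref{prop:whatami234123}(\ref{eqn:helper1}) gives $L_\kappa \models {\tt ZFC\mbox{-}P}$; in particular, $\kappa$ is $\Sigma_m$-admissible for every $m$ and $L_\kappa \models \mathrm{Axiom\mbox{\;}Beta}$. Setting $M = \mathrm{dcl}^{L_\kappa}_{\Sigma_n}(\lambda \cup \{\lambda\})$, Proposition~\ref{prop:whatami234123}(\ref{eqn:denclos}) yields $M \prec_n L_\kappa$, and a straightforward counting argument gives $|M| = \lambda$ since each element of $M$ is named by a $\Sigma_n$-formula code together with a finite tuple of parameters from $\lambda \cup \{\lambda\}$. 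Applying Proposition~\ref{prop:whatami234123}(\ref{eqn:genpop}) inside $L$ produces $M = L_{\alpha_n}$ for some ordinal $\alpha_n$ with $|\alpha_n| = \lambda$; since $\lambda + 1 \subseteq M$, we have $\lambda < \alpha_n < \kappa$, and Proposition~\ref{prop:whatami234123}(\ref{simpelthings}) transfers $\Sigma_n$-admissibility and Axiom~Beta from $L_\kappa$ to $L_{\alpha_n}$.

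Next I would build $\theta_n$, $\iota_n$, and $\chi_n$. The set $\mathcal{F}_n$ is $\Sigma_1$-definable over $L_{\alpha_n}$ because the set of $\Sigma_n$-formula codes and the graph of G\"odel's pairing function are both $\Delta_1$. The relation $R(\sigma, a) \equiv [\sigma = \langle \ulcorner \varphi \urcorner, \overline{\beta}\rangle \; \& \; L_{\alpha_n} \models \varphi(a, \overline{\beta}, \lambda)]$ is uniformly $\utilde{\Sigma}_n^{L_{\alpha_n}}$-definable via the $\Sigma_n$-satisfaction predicate on $L_{\alpha_n}$, so I take $\theta_n$ to be its $\utilde{\Sigma}_n^{L_{\alpha_n}}$-uniformization provided by Weak Uniformization (Proposition~\ref{prop:weakuniform}); this directly delivers both clauses of~(\ref{eqn:idoall}). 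Surjectivity of $\theta_n$ onto $L_{\alpha_n}$ follows because $\Sigma_n$-elementarity of $L_{\alpha_n}$ in $L_\kappa$ makes every element of $L_{\alpha_n}$ the unique witness of some $\Sigma_n$-formula with parameters in $\lambda \cup \{\lambda\}$. The injection $\iota_n$ is then given by Proposition~\ref{invert} applied to $\theta_n$. For $\chi_n$, I fix the $\Delta_0$-formula $\varphi(x, y, z) \equiv (x = 0 \wedge y = y)$ and define $\chi_n(\beta) = \langle \ulcorner \varphi \urcorner, \beta \rangle$; this is injective, lands in $\theta_n^{-1}(\{0\})$ by~(\ref{eqn:idoall}), and has $\Delta_0$-definable graph, so is a set in $L_{\alpha_n}$ by $\Delta_0$-separation on $\lambda$.

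For the projectum identity $\rho_n(\alpha_n) = \lambda$, the upper bound is witnessed directly by $\iota_n$. For the lower bound, a $\utilde{\Sigma}_n^{L_{\alpha_n}}$-definable injection $L_{\alpha_n} \to \rho$ with $\rho < \lambda$ would restrict to a total $\Sigma_n$-definable function on the set $\lambda \in L_{\alpha_n}$; strong $\Sigma_n$-replacement (from $\Sigma_n$-admissibility of $\alpha_n$) would then place its graph in $L_{\alpha_n}$ as an injection $\lambda \to \rho$, contradicting that $L_{\alpha_n}$ thinks $\lambda$ is a cardinal---a fact inherited from $L_\kappa$ via $L_{\alpha_n} \prec_1 L_\kappa$, since ``$\lambda$ is a cardinal'' is $\Pi_1$.

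Finally, for strict monotonicity, the inclusion $\alpha_n \leq \alpha_{n+1}$ is automatic from $\mathrm{dcl}^{L_\kappa}_{\Sigma_n} \subseteq \mathrm{dcl}^{L_\kappa}_{\Sigma_{n+1}}$, and strict inequality is where I expect the main delicate bookkeeping. Suppose for contradiction $\alpha_n = \alpha_{n+1}$, so $L_{\alpha_n} \prec_{n+1} L_\kappa$ and hence $\alpha_n$ is $\Sigma_{n+1}$-admissible. The image $\iota_n[L_{\alpha_n}] \subseteq \lambda$ is $\Sigma_n$-definable with $\Pi_n$-definable complement, hence both $\Sigma_{n+1}$ and $\Pi_{n+1}$, so $\Delta_{n+1}$-separation makes it an element of $L_{\alpha_n}$. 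But then the partial inverse of $\iota_n$ restricted to this set is a total $\Sigma_n$-definable function on a set whose range is all of $L_{\alpha_n}$, contradicting strong $\Sigma_{n+1}$-replacement. The delicate point, and the step I would take the most care over, is that when $\alpha_n$ is merely $\Sigma_n$-admissible the image $\iota_n[L_{\alpha_n}]$ is only $\Sigma_n$ (not $\Delta_n$), so no analogous contradiction with $\rho_n(\alpha_n) < \alpha_n$ arises at that level; it is precisely the jump to $\Sigma_{n+1}$-admissibility that puts the image within reach of separation.
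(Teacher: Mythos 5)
Your proposal is correct and follows essentially the same route as the paper's proof: take $L_{\alpha_n}=\mathrm{dcl}_{\Sigma_n}^{L_{\kappa}}(\lambda\cup\{\lambda\})$ working inside $L$, apply condensation (Proposition~\ref{prop:whatami234123}) to identify it as an $L_{\alpha_n}$ inheriting $\Sigma_n$-admissibility and Axiom~Beta, obtain $\theta_n$ by weakly uniformizing the $\Sigma_n$-satisfaction relation and $\iota_n$ by right-inverting it, and derive $\rho_n(\alpha_n)=\lambda$ and strict monotonicity from separation/replacement considerations. The only deviations are cosmetic --- the paper argues $\rho_n(\alpha_n)\geq\lambda$ by external cardinality rather than by internalizing the restricted injection, and runs the monotonicity contradiction through $\mathrm{dom}(\theta_n)\in L_{\alpha_n}$ via $\Sigma_n$-separation rather than through $\iota_n[L_{\alpha_n}]$ --- and both variants are sound.
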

\begin{proof}
Since the result is absolute, we may assume~${\tt V\hspace{-1mm}=\hspace{-1mm}L}$, and hence we may assume that~$\lambda$ and~$\kappa$ are cardinals. Since~$\kappa$ is regular uncountable one has that~$L_{\kappa}\models {\tt ZFC\mbox{-}P}$ (cf. Proposition~\ref{prop:whatami234123}, item \ref{eqn:helper1}). Let~$M=\mathrm{dcl}_{\Sigma_n}^{L_{\kappa}}(\lambda \cup \{\lambda\})$. Since the~$\Sigma_n$-definable closure is a~$\Sigma_n$-elementary substructure~(cf. Proposition~\ref{prop:whatami234123}, item~\ref{eqn:denclos}), we have~$M\prec_n L_{\kappa}$. Since~$\kappa=\lambda^{+}$ and~$\lambda\cup \{\lambda\}\subseteq M$ and~$M\prec_1 L_{\lambda^{+}}$, it follows from the consequence of~${\tt V\hspace{-1mm}=\hspace{-1mm}L}$ for~$\Sigma_1$-substructures of~$L$ up to a successor cardinal (Proposition~\ref{prop:whatami234123} item \ref{eqn:genpop}) that~$M=L_{\alpha_n}$ where~$\left|\alpha_n\right|= \lambda$. Then~$\lambda \leq \alpha_n<\kappa$. But since~$\lambda\in M=L_{\alpha_n}$ we have~$\lambda <\alpha_n<\kappa$. By Proposition~\ref{prop:whatami234123} item \ref{simpelthings}, we also have that~$L_{\alpha_n}$ is~$\Sigma_n$-admissible and satisfies Axiom~Beta.

Then define the following relation~$R_n\subseteq \mathcal{F}_n\times L_{\alpha_n}$ by
\begin{equation}
R_n(\langle \ulcorner \varphi(x, \overline{y},z)\urcorner, \overline{\beta}\rangle, a)\Longleftrightarrow  \langle \ulcorner \varphi(x, \overline{y},z)\urcorner, \overline{\beta}\rangle\in \mathcal{F}_n  \; \& \;\;  L_{\alpha_n}\models \varphi(a, \overline{\beta}, \lambda)
\end{equation}
Then by the definability of partial satisfaction predicates,~$R_n$ is~$\utilde{\Sigma}^{L_{\alpha_n}}_n$-definable. Then by weak uniformization (Proposition~\ref{prop:weakuniform}), choose a~$\utilde{\Sigma}^{L_{\alpha_n}}_n$-definable uniformization~$\theta_n:\mathcal{F}_n\dashrightarrow L_{\alpha_n}$ of~$R_n$. Then~$\theta_n$ is a surjective partial function. For, suppose that~$a\in L_{\alpha_n}$. Since~$L_{\alpha_n}=\mathrm{dcl}_{\Sigma_n}^{L_{\kappa}}(\lambda \cup \{\lambda\})$, there is~$\Sigma_n$-formula~$\varphi(x, \overline{y},z)$ and~$\overline{\beta}< \lambda$ such that 
\begin{equation}\label{eqn:whatigot4}
L_{\alpha_n} \models \varphi(a,\overline{\beta},\lambda) \; \& \; [\forall \; x \; (\varphi(x, \overline{\beta}, \lambda)\rightarrow x=a)]
\end{equation}
Then~$L_{\alpha_n} \models \exists \; x \; \varphi(x,\overline{\beta}, \lambda)$ and~$ \langle \ulcorner \varphi(x, \overline{y},z)\urcorner, \overline{\beta}\rangle$ is in~$\mathcal{F}_n$. Then on the input~$u= \langle  \ulcorner \varphi(x, \overline{y},z)\urcorner, \overline{\beta}\rangle$, we have that~$\theta_n(u)$ is defined and if~$\theta_n(u)=a^{\prime}$ then~$L_{\alpha_n} \models \varphi(a^{\prime},\overline{\beta},\lambda)$. But in conjunction with equation~(\ref{eqn:whatigot4}), it thus follows that~$a=a^{\prime}=\theta_n(u)$. Hence, indeed~$\theta_n:\mathcal{F}_n\dashrightarrow L_{\alpha_n}$ is a surjective partial function. Let~$\mathcal{F}^{\prime}_n\subseteq \mathcal{F}_n$ be the domain of~$\theta_n$, which is likewise~$\utilde{\Sigma}^{L_{\alpha_n}}_n$-definable. By the Proposition on Right-Inverting a Surjection (Proposition~\ref{invert}), it follows that there is a~$\utilde{\Sigma}^{L_{\alpha_n}}_n$-definable injection~$\iota_n: L_{\alpha_n}\rightarrow \mathcal{F}^{\prime}_n$ such that~$\theta_n \circ \iota_n=\mathrm{id}_{L_{\alpha_n}}$. Since~$\mathcal{F}_n\subseteq \lambda$, we then have that~$\rho_n(\alpha_n)\leq \lambda$. Since~$\lambda$ is a cardinal and~$L_{\alpha_n}$ has cardinality~$\lambda$, we must have then that~$\rho_n(\alpha_n)= \lambda$.

Now we argue that~$\alpha_1<\alpha_2<\alpha_3<\cdots$. Since~$L_{\alpha_n}=\mathrm{dcl}_{\Sigma_n}^{L_{\kappa}}(\lambda \cup \{\lambda\})$, we have that~$L_{\alpha_n}\subseteq L_{\alpha_{n+1}}$ and hence that~$\alpha_n\leq \alpha_{n+1}$. Suppose that it was not always that case that~$\alpha_n<\alpha_{n+1}$ for all~$n\geq 1$. Then~$\alpha_n=\alpha_{n+1}$ for some~$n\geq 1$. Since~$L_{\alpha_{n+1}}$ is~$\Sigma_{n\mbox{+}1}$-admissible and~$L_{\alpha_n}=L_{\alpha_{n+1}}$, we have that~$L_{\alpha_{n}}$ is~$\Sigma_{n\mbox{+}1}$-admissible and so satisfies~$\Sigma_{n}$-separation. Hence~$\mathcal{F}_n^{\prime}\in L_{\alpha_n}$ and hence by~$\Sigma_n$-replacement, the~$\utilde{\Sigma}_n^{L_{\alpha_n}}$-definable map~$\theta_n:\mathcal{F}_n^{\prime}\rightarrow L_{\alpha_n}$ would be bounded and thus not surjective.

Finally, we verify that for each~$n\geq 1$ there is injection~$\chi_n:\lambda\rightarrow \theta^{-1}_n(\{0,1\})$. Let~$\varphi(x,y,z)$ say ``$x=0$ and~$y$ is an ordinal.'' Then for each~$\beta<\lambda$ there is exactly one~$x$ in~$L_{\alpha_n}$ such that~$L_{\alpha_n}\models \varphi(x, \beta, \lambda)$. Then by equation~(\ref{eqn:idoall}) we have that~$\langle \ulcorner \varphi(x,y,z)\urcorner, \beta\rangle\in \mathrm{dom}(\theta_n)$ and we have by equation~(\ref{eqn:idoall}) that {}$\theta_n(\langle \ulcorner \varphi(x,y,z)\urcorner, \beta\rangle)=0$. Then define the function~$\chi_n:\lambda\rightarrow \theta^{-1}_n(\{0,1\})$ by~$\chi_n(\beta)=\langle \ulcorner \varphi(x,y,z)\urcorner, \beta\rangle$, which is clearly injective; further clearly the graph of~$\chi$ is in~$L_{\alpha_n}$.
\end{proof}

The extra information about the injection~$\chi_n:\lambda\rightarrow \theta^{-1}_n(\{0,1\})$ in Theorem~\ref{thm:basicexistence} will be primarily useful for our companion paper~\cite{Walsh2014ae}, where we use constructible sets to build models of an intensional type theory (cf. \S{5} of~\cite{Walsh2014ae}). The reason for the focus on $\{0,1\}$ is that in the tradition of type-theory these are used as ersatzes for the truth-values $\{F,T\}$. The map~$\chi_n$ then allows us to inject ordinals $\beta<\lambda$ into intensional entities $\chi_n(\beta)$ which determine truth-values $\theta_n(\chi_n(\beta))$, roughly after the manner in which we inject natural numbers~$e$ into algorithms~$P_e$ which determine computable number-theoretic functions~$\varphi_e$.

The proofs of these results can be seen as a generalization of our earlier constructions of models of~${\tt \Sigma^1_1\mbox{-}LB}_0$ of the form~$\mathcal{N}=(\omega, \mathrm{HYP}, \ldots, \partial)$ (\cite{Walsh2012aa} Theorem~53 p. 1695). Here~$\mathrm{HYP}$ denotes the hyperarithmetic subsets of natural numbers and~$\partial(Y)=\langle b,e\rangle$ only if~$b$ is a code for a computable ordinal~$\beta$ and~$Y$ is computable from~$b$'s canonical coding~$H_b$ of the~$\beta$-th Turing jump by the program~$e$. This earlier result can be seen as a special case of these results by virtue of the fact that if~$\alpha=\omega_1^{\mathrm{CK}}$ then~$P(\omega)\cap L_{\alpha}=\mathrm{HYP}$ (cf. Sacks~\cite{Sacks1990} \S~III.9 Exercise 9.12 p. 87). The primary difference between the proofs here and our earlier constructions of models of~${\tt \Sigma^1_1\mbox{-}LB}_0$ (\cite{Walsh2012aa} Theorem~53 p. 1695) was that the latter used Kond{\^o}'s Uniformization Theorem (\cite{Simpson2009aa} p. 224,~\cite{Kechris1995} p. 306), while the proof here used uniformization results in the constructible hierarchy like weak uniformization~\ref{prop:weakuniform}. Further, our results here can cover not just Basic Law~V, but the abstraction principles described in \S\ref{sec0.1555predab}.

Finally, we can now prove the main results on the consistency of abstraction principles in the predicative setting, which were first stated and motivated in \S\ref{sec0.1555predab}. As for Theorem~\ref{cor:consistency}, this is a limiting case of the Joint Consistency Theorem~\ref{thm:jointconsistency}. So it remains to establish this latter theorem:
\begin{proof} (\emph{of Joint Consistency Theorem}~\ref{thm:jointconsistency}):
So suppose that the formulas~$E_1(R,S)$, \ldots,~$E_k(R,S)$ in the signature of~${\tt SO}+{\tt GC}$ are provably equivalence relations on~$m_i$-ary concepts in~${\tt SO}+{\tt GC}$.

The theory~${\tt SO}$ from Definition~\ref{eqn:secondorderlogic} can be naturally written as the union of theories~${\tt SO}_m$, where ${\tt SO}_m$ restricts the instances of the Full Comprehension Schema~(\ref{eqn:unaryfullcomp}) in the signature of~${\tt SO}+{\tt GC}$ to its $\Sigma^1_m$-instances, where this is the standard notion for formulas which begin with $m$-alternating blocks of quantifiers, the first of which is a block of second-order existential quantifiers. By compactness, it suffices to show, for each $m\geq 1$, that the theory~${\tt \Sigma^1_1\mbox{-}[E_1, \ldots, E_k]A}+{\tt SO}_m+{\tt GC}$ is consistent.

Let us then fix, for the remainder of this proof, an $m\geq 1$. Choose $n>m$ sufficiently large so that (i)~the formulas~$E_1(R,S)$, \ldots,~$E_k(R,S)$ are provably equivalence relations in ${\tt SO}_{n\mbox{-}1}+{\tt GC}$, and (ii) each of the formulas~$E_1(R,S)$, \ldots,~$E_k(R,S)$ is a $\Sigma^1_{n\mbox{-}1}$-formula.

By the Existence Theorem~\ref{thm:basicexistence}, choose $\Sigma_n$-admissible $\alpha$ such that $\rho=\rho(\alpha)<\alpha$ and $L_{\alpha}$ is a model of Axiom~Beta. Then consider the following structure~$\mathcal{M}$ in the signature of~${\tt SO}$, as was also featured in equation~(\ref{eqn:555666}) of the hypothesis of the Construction Theorem~\ref{thm:BCT}:
\begin{equation}\label{eqn:555666redux}
\mathcal{M} = (\rho, P(\rho)\cap L_{\alpha}, P(\rho\times \rho)\cap L_{\alpha}, \ldots)
\end{equation}
Since $\alpha$ is $\Sigma_n$-admissible, one has that $L_{\alpha}$ satisfies $\Sigma_{n-1}$-separation. Thus the structure~$\mathcal{M}$ from equation~(\ref{eqn:555666redux}) satisfies the theory ${\tt SO}_{n-1}$ since instances of the Full-Comprehension Schema~(\ref{eqn:unaryfullcomp}) in the signature of ${\tt SO}$ associated to $\Sigma^1_{n-1}$-formulas correspond naturally to $\Sigma_{n-1}$-instances of separation in $L_{\alpha}$ on the set~$\rho$. When expanded by the well-order~$<$ on objects given by the membership relation on~$\rho$, it likewise satisfies the theory ${\tt SO}_{n-1}+{\tt GC}$.  Since the formulas~$E_1(R,S)$, \ldots,~$E_k(R,S)$ are provably equivalence relations on $m_i$-ary concepts in ${\tt SO}_{n\mbox{-}1}+{\tt GC}$, it follows that they are likewise a~${\utilde \Sigma}^{1, \mathcal{M}}_{n\mbox{-}1}$-definable equivalence relation on~$P(\rho^{m_i})\cap L_{\alpha}$. Then by the Construction Theorem, one can build a model of ${\tt \Sigma^1_1\mbox{-}[E_1, \ldots, E_k]A}+{\tt GC}$ of the following form, wherein again the global well-order~$<$ is given by the membership relation on~$\rho$:
\begin{equation}\label{eqn:thestructureredux}
\mathcal{N}=(\rho, P(\rho)\cap L_{\alpha}, P(\rho\times \rho)\cap L_{\alpha}, \ldots, \partial_{E_1}, \ldots, \partial_{E_k},<)
\end{equation}
Since satisfaction of formulas in the signature of~$\mathcal{M}$ is invariant between it and its expansion~$\mathcal{N}$, this model~$\mathcal{N}$ is the witness to the consistency of~${\tt \Sigma^1_1\mbox{-}[E_1, \ldots, E_k]A}+{\tt SO}_m+{\tt GC}$.
\end{proof}

\section{Identifying the Well-Founded Extensions}\label{sec05}

The goal of this section is to establish the Main Theorem~\ref{thm:main}. This is done in two steps: (i) first by identifying in Theorem~\ref{thM:firstidwef} the well-founded extensions within models induced via the Construction Theorem~\ref{thm:BCT} from~$L_{\alpha}$, and (ii) second in Theorem~\ref{thm1} by an identification within models satisfying~$\mathrm{Axiom\mbox{\;}Beta}$~(cf. Definition~\ref{defna:axiombeta}). The basic idea of these proofs is to relate the notion~$\mathrm{Trcl}_{\eta}(x)$ from \S\ref{sec02.1} equation~(\ref{eqn:iamtransitiveclosureeta}) defined in the object-language of a model of~${\tt \Sigma^1_1\mbox{-}LB}$ to the notion~$\mathrm{trcl}_{\eta}(x)$ defined in the meta-language. In particular, given an arbitrary relation~$R$, the notion~$\mathrm{trcl}_{R}(x)$ is defined to be the set of all~$y$ such that there is a finite sequence~$x_1, \ldots, x_n$ such that~$x_1=y$ and~$x_n=x$ and~$x_{m} R x_{m+1}$ for all~$m< n$. So a model~$\mathcal{N}$ of~${\tt \Sigma^1_1\mbox{-}LB}$ induces a specific relation~$\eta$ via the definition of the Fregean membership relation from equation~(\ref{eqn:Fregemembership}), and then~$\mathrm{trcl}_{\eta}(x)$ is defined to be~$\mathrm{trcl}_{R}(x)$ with~$R=\eta$. Finally, recall that the well-founded extensions~$\mathrm{wfExt}$ were defined in~(\ref{eqnasdfsadf2}).

\begin{thm}\label{thM:firstidwef} (\emph{First Identification of Well-Founded Extensions})
Suppose~$n\geq 1$. Suppose that~$L_{\alpha}$ is~$\Sigma_n$-admissible. Let~$\rho=\rho_n(\alpha)$ and let~$\partial:L_{\alpha}\rightarrow \rho$ be a witnessing~$\utilde{\Sigma}_n^{L_{\alpha}}$-definable injection. Suppose also that~$\rho<\alpha$. Then the structure
\begin{equation}
\mathcal{N} = (\rho, P(\rho)\cap L_{\alpha}, P(\rho\times \rho)\cap L_{\alpha}, \ldots, \partial\upharpoonright (P(\rho)\cap L_{\alpha})) 
\end{equation}
is a model of~${\tt \Sigma^1_1\mbox{-}LB}+{\tt GC}$, where the global well-order on objects is given by the membership relation on~$\rho$. Further:
\begin{align}
\mathrm{wfExt}(\mathcal{N})  = \{x\in &\rho :   (\mathrm{trcl}_{\eta}(x)\cup \{x\}, \eta) \mbox{ is} \notag\\
& ~\mbox{$\utilde{\Delta}^{L_{\alpha}}_n$-well-founded} \; \& \; (\mathrm{trcl}_{\eta}(x)\cup \{x\}) \subseteq \mathrm{rng}(\partial)\} \label{whasdsafsafdsafds}
\end{align}
Moreover, there is a~$\utilde{\Sigma}_n^{L_{\alpha}}$-definable embedding~$j:(L_{\alpha},\in)\rightarrow (\mathrm{wfExt}(\mathcal{N}), \eta)$, and its image is:
\begin{align}
\mathrm{wfExt}_{\ast}(\mathcal{N})  = \{x\in & \rho :   (\mathrm{trcl}_{\eta}(x) \cup \{x\}, \eta) \mbox{ is } \notag \\
& \mbox{well-founded} \; \& \; (\mathrm{trcl}_{\eta}(x) \cup \{x\}) \subseteq \mathrm{rng}(\partial)\}\label{whasdsafsafdsafds123423}
\end{align}
Finally, the isomorphism~$j:(L_{\alpha},\in)\rightarrow (\mathrm{wfExt}_{\ast}(\mathcal{N}), \eta)$ is the inverse of the Mostowski collapse~$\pi: (\mathrm{wfExt}_{\ast}(\mathcal{N}), \eta)\rightarrow (L_{\alpha},\in)$.
\end{thm}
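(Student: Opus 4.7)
The plan is to verify five separate claims in sequence: (i) $\mathcal{N}$ is a model of the theory; (ii) the identification of $\mathrm{wfExt}(\mathcal{N})$ given by equation~(\ref{whasdsafsafdsafds}); (iii) existence of the embedding $j$; (iv) its image equals $\mathrm{wfExt}_{\ast}(\mathcal{N})$ per equation~(\ref{whasdsafsafdsafds123423}); and (v) $j$ inverts the Mostowski collapse. Claim (i) is an immediate application of the Construction Theorem~\ref{thm:BCT} specialized to $k=1$, $m_1=1$, and $E_1(R,S)\equiv R=S$; then each equivalence class is a singleton, so $\partial_{E_1}$ reduces to $\partial\upharpoonright(P(\rho)\cap L_{\alpha})$ and the output model is exactly $\mathcal{N}$.

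For (ii), I would first show that the object-language predicate $\mathrm{Trcl}_{\eta}(\sigma(x))$ has, as its meta-level extension in $\rho$, precisely the set $\mathrm{trcl}_{\eta}(x)\cup\{x\}$. The $\supseteq$ direction follows from Proposition~\ref{prop:elementary} by universal instantiation: any $\eta$-transitive concept whose extension $\eta$-contains $\sigma(x)$'s $\eta$-members must contain $x$ and, by iterated $\eta$-closure, all of $\mathrm{trcl}_{\eta}(x)$. For the $\subseteq$ direction the idea is to show that $\mathrm{trcl}_{\eta}(x)\cup\{x\}$ itself lies in $P(\rho)\cap L_{\alpha}$ and may thus be used to instantiate the universal quantifier. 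Since $\eta$ is $\utilde{\Sigma}_n^{L_{\alpha}}$-definable (by the atomic-formula analysis in the Construction Theorem's proof) and since reachability by finite $\eta$-chains amounts to bounded quantification over $L_{\omega}$, this set is both $\utilde{\Sigma}_n$- and $\utilde{\Pi}_n$-definable, hence $\utilde{\Delta}_n^{L_{\alpha}}$-definable, and so lies in $L_{\alpha}$ by $\Delta_n$-separation applied to $\rho$. The well-foundedness equivalence then follows because the subconcepts of this set in $\mathcal{N}$ are exactly its $\utilde{\Delta}_n^{L_{\alpha}}$-definable subsets, again by $\Delta_n$-separation.

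For (iii)--(v), I would define $j:L_{\alpha}\to\rho$ by $\in$-recursion $j(a)=\partial(\{j(b):b\in a\})$, with well-definedness secured by $\Sigma_n$-replacement, which places $\{j(b):b\in a\}$ in $P(\rho)\cap L_{\alpha}$. The graph of $j$ is then $\utilde{\Sigma}_n^{L_{\alpha}}$-definable since $\partial$ is; injectivity follows by $\in$-induction from injectivity of $\partial$; and the equivalence $z\eta j(a)\Longleftrightarrow z\in\{j(b):b\in a\}$ (again using injectivity) shows that $j$ is an $\in$-to-$\eta$-embedding with image inside $\mathrm{wfExt}_{\ast}(\mathcal{N})$. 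Claims (iv) and (v) would then be established together by a meta-level well-founded induction: given $y\in\mathrm{wfExt}_{\ast}(\mathcal{N})$, set $R=\partial^{-1}(y)\in P(\rho)\cap L_{\alpha}$; by the induction hypothesis each $z\in R$ is $j(b_z)$ for a unique $b_z\in L_{\alpha}$; by $\Sigma_n$-replacement $a=\{b_z:z\in R\}\in L_{\alpha}$; and so $j(a)=\partial(R)=y$. The same induction exhibits $j^{-1}$ as satisfying the Mostowski recursion $\pi(y)=\{\pi(z):z\eta y\}$.

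The main obstacle I anticipate is this joint induction: justifying that $\Sigma_n$-replacement genuinely applies to the partial inverse $j^{-1}$ restricted to $R$ requires the restriction to be $\utilde{\Sigma}_n^{L_{\alpha}}$-definable, which in turn uses that meta-level well-foundedness of $(\mathrm{trcl}_{\eta}(y)\cup\{y\},\eta)$ forces the inductive construction to terminate without invoking resources outside $L_{\alpha}$. A secondary technical concern in the identification step of paragraph two is verifying that negations of the $\utilde{\Sigma}_n$-predicate $\eta$ occurring inside the bounded chain-existential retain complexity $\utilde{\Pi}_n$, which follows from the $\utilde{\Delta}_n^{L_{\alpha}}$-definability of the graph of $\partial$ established in the Construction Theorem.
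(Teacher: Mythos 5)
Your skeleton matches the paper's: the model claim via the Construction Theorem with $E(R,S)\equiv R=S$, the inclusion $\mathrm{trcl}_{\eta}(x)\subseteq\mathrm{Trcl}_{\eta}(x)^{\mathcal{N}}$ by chasing finite $\eta$-chains, the embedding $j$ by $\in$-recursion with $\Sigma_n$-replacement, surjectivity onto $\mathrm{wfExt}_{\ast}(\mathcal{N})$ by taking a minimal counterexample and applying collection plus separation, and the final claim by uniqueness of the Mostowski collapse. But there is a genuine gap at the technical heart of the theorem, namely your justification that $\mathrm{trcl}_{\eta}(x)\cup\{x\}$ lies in $P(\rho)\cap L_{\alpha}$. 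You claim this set is $\utilde{\Delta}^{L_{\alpha}}_n$-definable and invoke $\Delta_n$-separation. The natural chain-existential definition is only $\utilde{\Sigma}^{L_{\alpha}}_n$: the relation $y\eta z$ is $\exists\,B\,(\partial(B)=z\;\&\;By)$, an unbounded existential over concepts, and its negation is genuinely $\Pi_n$ rather than $\Sigma_n$ --- indeed $\neg(y\eta z)$ entails deciding whether $z\in\mathrm{rng}(\partial)$, and $\mathrm{rng}(\partial)$ cannot be $\utilde{\Delta}^{L_{\alpha}}_n$, since by $\Delta_n$-separation it would then be a concept, contradicting the Russell-paradox fact cited after equation~(\ref{eqn:Fregemembership}). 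So the $\utilde{\Delta}_n$-definability of the graph of $\partial$ (which does hold) does not transfer to $\eta$, and $\Sigma_n$-separation is precisely what a $\Sigma_n$-admissible does \emph{not} provide. Worse, your argument makes no use of the hypothesis $(\mathrm{trcl}_{\eta}(x)\cup\{x\})\subseteq\mathrm{rng}(\partial)$, whereas that hypothesis is essential to the paper's proof of this step (its equation~(\ref{help2})).

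The paper's route, which you would need to reconstruct, is: use the Existence of the Restricted $\eta$-Relation (Proposition~\ref{iamaveryhelpfulprop}, i.e.\ $\Sigma^1_1$-choice) to show that for $U\in P(\rho)\cap L_{\alpha}$ with $U\subseteq\mathrm{rng}(\partial)$ there is a single binary concept $S\in L_{\alpha}$ with $\partial(S[w])=w$ for all $w\in U$, so that the one-step predecessor set $\tau(U)=\{v:\exists\,w\in U\;v\eta w\}$ becomes $\Delta_0$ in the parameter $S$ and hence lies in $L_{\alpha}$; then iterate, using the hypothesis that the whole transitive closure consists of extensions to keep each $\tau^{(k)}(X)$ inside $\mathrm{rng}(\partial)$, and apply $\Sigma_n$-replacement along $\omega$ to collect the sequence $\langle\tau^{(k)}(X)\rangle_{k<\omega}$ into $L_{\alpha}$ and take its union. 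Without this, both directions of the identification~(\ref{whasdsafsafdsafds}) break down: you cannot instantiate the universal concept quantifier in $\mathrm{Trcl}_{\eta}$ at $\mathrm{trcl}_{\eta}(x)$, and you cannot pass between $\utilde{\Delta}^{L_{\alpha}}_n$-well-foundedness and well-foundedness witnessed by subconcepts. Your remaining concerns (the replacement applied to $j^{-1}$ on $R$) are real but resolvable exactly as the paper does, by $\Sigma_n$-collection to bound the preimages followed by $\Delta_n$-separation using both a $\Sigma_n$ and a $\Pi_n$ description of $\{x\in X: j(x)\in Y\}$.
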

\noindent For the statement of the Mostowski collapse theorem, see the discussion immediately following the definition of Axiom~Beta (Definition~\ref{defna:axiombeta}).
\begin{proof}
By the Construction Theorem~\ref{thm:BCT}, the structure~$\mathcal{N}$ is a model of~${\tt \Sigma^1_1\mbox{-}LB}+{\tt GC}$. Now we argue for the identity in equation~(\ref{whasdsafsafdsafds}). To see this identity, let us first show both of the following, wherein~$x$ is an arbitrary element of~$\rho$:
\begin{align}
&  w\in \mathrm{trcl}_{\eta}(x) \Longrightarrow \mathcal{N}\models (\mathrm{Trcl}_{\eta}(x))(w)\label{help1} \\
&   (\mathrm{trcl}_{\eta}(x)\cup \{x\}) \subseteq \mathrm{rng}(\partial) \Longrightarrow \mathrm{trcl}_{\eta}(x) \in L_{\alpha} \label{help2}
\end{align}
For equation~(\ref{help1}), suppose that~$w\in \mathrm{trcl}_{\eta}(x)$ and suppose that~$F\in (P(\rho)\cap L_{\alpha})$ is such that~$\mathcal{N}\models [\forall \; z \; (z\eta x \rightarrow Fz ) \; \& \; \forall \; u,v \; ((Fv \; \& \; u\eta v) \rightarrow Fu)]$. We must show that~$w\in F$. Since~$w\in \mathrm{trcl}_{\eta}(x)$, choose a sequence~$y_1, \ldots, y_n\in \rho$ such that~$y_1=w$ and~$y_n=x$ and~$y_i\eta y_{i+1}$ for all~$i<n$. Then we may show by induction on~$0<k\leq n\mbox{-}1$ that~$y_{n-k}\in F$.

For equation~(\ref{help2}), first define a map~$\tau: P(\rho)\rightarrow P(\rho)$ by~$\tau(U) = \{v\in \rho: \exists \; w \in U \; v\eta w\}$. Now, it follows from the proposition on the existence of restricted~$\eta$-relation (Proposition~\ref{iamaveryhelpfulprop}) that the map~$\tau$ has the property:
\begin{align}
  [U\in L_{\alpha}  \; \& \; U\subseteq \mathrm{rng}(\partial)] 
  & \Rightarrow     \exists \; S\in (P(\rho\times \rho)\cap L_{\alpha}) \;  [\forall \; w\in U \; \partial(S[w])=w \notag \\
  & \; \& \; \tau(U) = \{v\in \rho: \exists\; w\in U\; v\in S[w]\}\in L_{\alpha}] \label{eqn111}
\end{align}
Let us note one further property of the map~$\tau$, namely its connection to transitive closure:
\begin{equation}\label{eqn:Lobvcios}
U\in (P(\rho)\cap L_{\alpha})\Rightarrow \mathrm{trcl}_{\eta}(\partial(U)) = \bigcup_{n=0}^{\infty} \tau^{(n)} (U)
\end{equation}
To see this, suppose that~$U\in (P(\rho)\cap L_{\alpha})$. First consider the left-to-right direction of the identity. Suppose that~$y\in \mathrm{trcl}_{\eta}(\partial(U))$. Then there are~$y_1, \ldots, y_n$ where~$y_1=y$ and~$y_n=\partial(U)$ and~$y_i\eta y_{i+1}$ for~$i<n$. By induction on~$0<k\leq n\mbox{-}1$ we may then show that~$y_{n\mbox{-}k}\in \tau^{(k\mbox{-}1)}(U)$. Second, consider the right-to-left direction of the identity in equation~(\ref{eqn:Lobvcios}). For this one simply shows by induction on~$n\geq 0$, that~$\tau^{(n)}(U)\subseteq \mathrm{trcl}_{\eta}(\partial(U))$.

Turning now to the verification of equation~(\ref{help2}), suppose that~$(\mathrm{trcl}_{\eta}(x)\cup \{x\}) \subseteq \mathrm{rng}(\partial)$.  Then~$\partial(X)=x$ for some~$X\in (P(\rho)\cap L_{\alpha})$. Now we argue that~$\tau^{(n)}(X)\in L_{\alpha}$ for all~$n\geq 0$. Clearly this holds for~$n=0$, since by hypothesis one has that~$\tau^{(0)}(X)=X\in L_{\alpha}$. Suppose, for the induction step, that~$\tau^{(n)}(X)\in L_{\alpha}$. Then by equation~(\ref{eqn:Lobvcios}) we can collect together the following information: $\tau^{(n)}(X)\in L_{\alpha}$ and $\tau^{(n)}(X)\subseteq \mathrm{trcl}_{\eta}(x) \subseteq  \mathrm{rng}(\partial)$. Then we can deduce immediately from equation~(\ref{eqn111}) that~$\tau^{n+1}(X)=\tau(\tau^{n}(X))\in L_{\alpha}$. So now we have finished arguing that~$\tau^{(n)}(X)\in L_{\alpha}$ for all~$n\geq 0$. By appealing repeatedly to the proposition on the existence of restricted~$\eta$-relation (Proposition~\ref{iamaveryhelpfulprop}),  one has that~$L_{\alpha}$ models that for all~$n<\omega$ there is a sequence~$\langle U_0, S_0, \ldots, U_n, S_n\rangle$ of elements of~$U_i\in P(\rho)\cap L_{\alpha}$,~$S_i\in P(\rho\times\rho)\cap L_{\alpha}$ such that $U_0=X$ and
\begin{align}
&   \forall \; m\leq n \; \forall \; w\in U_m \; \partial(S_m[w])=w\label{manythree1} \\
&  \forall \; m<n \; U_{m+1} = \{v\in \rho: \exists \; w\in U_m \; v\in S_m[w]\}\label{manythree3}
\end{align}
Let~$n<\omega$ and let~$\langle U_0, S_0, \ldots, U_n, S_n\rangle$ be such a sequence. We argue by induction on~$m\leq n$ that~$U_m=\tau^{(m)}(X)$. Clearly this holds for~$m=0$ since $U_0=X$. Suppose it holds for~$m<n$. To see it holds for~$m+1$, note that equation~(\ref{manythree1}) and equation~(\ref{manythree3}) and the induction hypothesis imply 
\begin{align}
& \forall \; w\in \tau^{(m)}(X) \; \partial(S_m[w])=w \\
& U_{m+1} = \{v\in \rho: \exists \; w\in\tau^{(m)}(X) \; v\eta w\} = \tau^{(m+1)}(X) 
\end{align}
So consider the following function~$f:\omega\rightarrow L_{\alpha}$ defined as follows:~$f(m)=U$ iff there is a sequence~$\langle U_0, S_0, \ldots, U_m, S_m\rangle$  satisfying (\ref{manythree1})-(\ref{manythree3}) such that~$U=U_m$. Then the graph of~$f$ is~$\utilde{\Sigma}_n^{L_{\alpha}}$-definable and so by~$\Sigma_n$-replacement, its graph exists as a set in~$L_{\alpha}$. Hence the infinite sequence {}$\langle \tau^{(0)}(X),\tau^{(1)}(X), \ldots, \tau^{(n)}(X), \ldots\rangle$ is an element of~$L_{\alpha}$ and so by equation~(\ref{eqn:Lobvcios}), one also has that~$\mathrm{trcl}_{\eta}(x)=\mathrm{trcl}_{\eta}(\partial(X))\in L_{\alpha}$. So we have finished now the verification of equation~(\ref{help2}).

Now we proceed to the verification of equation~(\ref{whasdsafsafdsafds}). Suppose first that we have an extension~$x\in \mathrm{wfExt}(\mathcal{N})$. Recall that the membership conditions of~$\mathrm{wfExt}(\mathcal{N})$ are defined in equation~(\ref{eqnasdfsadf1}), so that
\begin{equation}\label{eqnsadfsdaf}
\mathcal{N}\models (\mathrm{Trcl}_{\eta}(\sigma(x)), \eta) \mbox{ is well-founded} \; \& \; (\mathrm{Trcl}_{\eta}(\sigma(x))) \subseteq \mathrm{rng}(\partial)
\end{equation}
By equation~(\ref{help1}), we automatically have that
\begin{equation}
(\mathrm{trcl}_{\eta}(x)\cup \{x\}) \subseteq \{w\in \rho: \mathcal{N}\models \mathrm{Trcl}_{\eta}(x)(w)\vee w=x\} \subseteq \mathrm{rng}(\partial)
\end{equation}
Hence from equation~(\ref{help2}), we may conclude that~$\mathrm{trcl}_{\eta}(x)\in L_{\alpha}$. Note that if we set~$F=\mathrm{trcl}_{\eta}(x)$ then~$F$ satisfies the following condition:
\begin{equation}
[\forall \; z \; (z\eta x \rightarrow Fz ) \; \& \; \forall \; u,v \; ((Fv \; \& \; u\eta v) \rightarrow Fu)]\label{dsafasdfasdfasdf}
\end{equation}
Since~$F\in (P(\rho)\cap L_{\alpha})$, it follows that the converse to equation~(\ref{help1}) holds as well, so that we may conclude $\mathrm{trcl}_{\eta}(x) = \{w\in \rho: \mathcal{N}\models \mathrm{Trcl}_{\eta}(x)(w)\}$. So now suppose that~$(\mathrm{trcl}_{\eta}(x) \cup \{x\}, \eta)$ is not~$\utilde{\Delta}^{L_{\alpha}}_n$-well-founded. Then there is some non-empty~$\utilde{\Delta}^{L_{\alpha}}_n$-definable subset~$Z$ of~$(\mathrm{trcl}_{\eta}(x) \cup \{x\}, \eta)$ which has no~$\eta$-least member. By~$\Delta_n$-separation in~$L_{\alpha}$ on the set~$(\mathrm{trcl}_{\eta}(x)\cup \{x\})\in L_{\alpha}$, we have~$Z\in P(\rho)\cap L_{\alpha}$, which is a contradiction. So we just completed the left-to-right direction of equation~(\ref{whasdsafsafdsafds}). For the other direction, suppose that~$x\in \rho$ and
\begin{equation}
(\mathrm{trcl}_{\eta}(x) \cup \{x\}, \eta) \mbox{ is~$\utilde{\Delta}^{L_{\alpha}}_n$-well-founded} \; \& \; (\mathrm{trcl}_{\eta}(x) \cup \{x\}) \subseteq \mathrm{rng}(\partial)
\end{equation}
Then equation~(\ref{help2}) implies that~$\mathrm{trcl}_{\eta}(x)\in L_{\alpha}$. By a similar argument, we have that~$x\in \mathrm{wfExt}(\mathcal{N})$. So we have now finished verifying equation~(\ref{whasdsafsafdsafds}).

Now we turn to constructing an embedding~$j:L_{\alpha}\rightarrow \rho$. By transfinite recursion, there is~$\utilde{\Sigma}_n^{L_{\alpha}}$-definable~$j:L_{\alpha}\rightarrow \rho$ which satisfies~$j(x) = \partial(\{j(y): y\in x\})$. Then one has that~$y \in x$ implies~$j(y)\eta j(x)$. Further, since~$\partial:L_{\alpha}\rightarrow \rho$ is an injection, we may argue by induction that~$j:L_{\alpha}\rightarrow \rho$ is an injection. Since~$j:L_{\alpha}\rightarrow \rho$ is an injection,~$y \in x$ iff~$j(y)\eta j(x)$. Hence,~$j:L_{\alpha}\rightarrow \rho$ is indeed an embedding.

Now we argue that~$j:L_{\alpha}\rightarrow \mathrm{wfExt}_{\ast}(\mathcal{N})$. First let us show:
\begin{equation}\label{nsdafasdfsad}
x\in L_{\alpha} \Longrightarrow  (\mathrm{trcl}_{\eta}(j(x)) \cup \{j(x)\}) \subseteq \mathrm{rng}(j)\subseteq \mathrm{rng}(\partial)
\end{equation} 
Let~$x\in L_{\alpha}$ and let~$y\in \mathrm{trcl}_{\eta}(j(x))$. Then there are 
~$y_1, \ldots, y_n$ in~$\rho$ with~$y_1=y$ and~$y_n=j(x)$ and~$y_1 \eta y_2, \ldots y_{n\mbox{-}1}\eta y_n$. Then using the definition of~$j$  we may argue by induction that~$y_i=j(x_i)$ for~$x_i\in L_{\alpha}$. Let us now argue that 
\begin{equation}\label{nsdafasdfsad2}
x\in L_{\alpha} \Longrightarrow  (\mathrm{trcl}_{\eta}(j(x)) \cup \{j(x)\}, \eta) \mbox{ is well-founded}
\end{equation}
For, suppose that there was an infinite descending~$\eta$-sequence~$y_n$ in the set~$(\mathrm{trcl}_{\eta}(j(x)) \cup \{j(x)\}) \subseteq \mathrm{rng}(j)$. Then since~$j$ is an embedding this would lead to an infinite descending~$\in$-sequence.

Before proceeding, let's note that~$\eta$ is well-founded on~$\mathrm{wfExt}_{\ast}(\mathcal{N})$. For, suppose that~$\emptyset \neq X\subseteq \mathrm{wfExt}_{\ast}(\mathcal{N})$. Choose~$x$ with~$Xx$, so that of course~$x$ is in~$\mathrm{wfExt}_{\ast}(\mathcal{N})$. Then consider~$X^{\prime}=X\cap (\mathrm{trcl}_{\eta}(x) \cup \{x\})$, which is a non-empty subset of~$\mathrm{trcl}_{\eta}(x) \cup \{x\}$. So there is some~$x_0$ with~$X^{\prime}x_0$ such that~$y\eta x_0$ implies~$\neg X^{\prime}y$. Suppose that~$y\eta x_0$ with~$Xy$. Since~$x_0$ is in~$\mathrm{trcl}_{\eta}(x) \cup \{x\}$ and~$y\eta x_0$, we have that~$y$ is in~$(\mathrm{trcl}_{\eta}(x)\cup \{x\})$. Then of course~$y$ is in~$X^{\prime}=X\cap (\mathrm{trcl}_{\eta}(x) \cup \{x\})$, which is a contradiction. So indeed~$\eta$ is well-founded on~$\mathrm{wfExt}_{\ast}(\mathcal{N})$.

Now let us argue that~$j:L_{\alpha}\rightarrow \mathrm{wfExt}_{\ast}(\mathcal{N})$ is surjective. First note that it follows from the definitions that the class~$\mathrm{wfExt}_{\ast}(\mathcal{N})$ is transitive in the following sense:
\begin{equation}\label{asdfasdfadsfdsaf}
[y,z\in \rho \; \& \; y\in \mathrm{wfExt}_{\ast}(\mathcal{N}) \; \& \; z\eta y] \Longrightarrow z\in \mathrm{wfExt}_{\ast}(\mathcal{N})
\end{equation}
So let's proceed in establishing surjectivity by reductio: suppose that~$j:L_{\alpha}\rightarrow \mathrm{wfExt}_{\ast}(\mathcal{N})$ is not surjective. So there is some~$y\in \mathrm{wfExt}_{\ast}(\mathcal{N})\setminus j\mbox{''}L_{\alpha}$. Since~$\eta$ is well-founded on~$\mathrm{wfExt}_{\ast}(\mathcal{N})$ and since~$\mathrm{wfExt}_{\ast}(\mathcal{N})$ is transitive~(\ref{asdfasdfadsfdsaf}), there is~$y\in \mathrm{wfExt}_{\ast}(\mathcal{N})\setminus j\mbox{''}L_{\alpha}$ such that
\begin{equation}
z\eta y \Longrightarrow z\in (\mathrm{wfExt}_{\ast}(\mathcal{N})\cap j\mbox{''}L_{\alpha})
\end{equation}
Since~$y\in \mathrm{wfExt}_{\ast}(\mathcal{N})\subseteq \mathrm{rng}(\partial)$, choose~$Y\in (P(\rho)\cap L_{\alpha})$ such that~$\partial(Y)=y$. Then by the previous equation, we may conclude that $L_{\alpha} \models \forall \; z\in Y \; \exists \; x \; j(x)=z$. By~$\Sigma_n$-collection, choose~$X\in L_{\alpha}$ such that $L_{\alpha} \models \forall \; z\in Y \; \exists \; x\in X \; j(x)=z$. Then set~$X^{\prime} = X\cap j^{-1}(Y)=\{x\in X: j(x)\in Y\}$ which is in~$L_{\alpha}$ by~$\Delta_n$-separation since in addition to its natural~$\utilde{\Sigma}_n^{L_{\alpha}}$-definition it has the following~$\utilde{\Pi}_n^{L_{\alpha}}$-definition: $X^{\prime} = \{x\in X: \forall \; y\in (L_{\alpha}\setminus Y) \; j(x)\neq y\}$. Also~$\{j(x): x\in X^{\prime}\} = Y$, so that we have $j(X^{\prime}) = \partial(\{j(x): x\in X^{\prime}\}) = \partial(Y)=y$ which contradicts the hypothesis that~$y$ was not in the image of~$j$.

Finally note that the isomorphism~$j:(L_{\alpha},\in)\rightarrow (\mathrm{wfExt}_{\ast}(\mathcal{N}), \eta)$ is the inverse of the Mostowski collapse~$\pi: (\mathrm{wfExt}_{\ast}(\mathcal{N}), \eta)\rightarrow (L_{\alpha},\in)$ due to the uniqueness of the latter isomorphism.
\end{proof}

\begin{thm}\label{thm1} (\emph{Second Identification of the Well-Founded Extensions})
Suppose that~$n\geq 1$ and~$L_{\alpha}$ is~$\Sigma_n$-admissible and satisfies Axiom~Beta. Let~$\rho=\rho_n(L_{\alpha})<\alpha$  and let~$\partial:L_{\alpha}\rightarrow \rho$ be a witnessing~$\utilde{\Sigma}_n^{L_{\alpha}}$-definable injection. Then the structure
\begin{equation}
\mathcal{N} = (\rho, P(\rho)\cap L_{\alpha}, P(\rho\times \rho)\cap L_{\alpha}, \ldots, \partial\upharpoonright P(\rho)\cap L_{\alpha}) 
\end{equation}
is a model of~${\tt \Sigma^1_1\mbox{-}LB}+{\tt GC}$, where the global well-order on objects is given by the membership relation on~$\rho$. Further,~$(L_{\alpha}, \in)$ is isomorphic to~$(\mathrm{wfExt}(\mathcal{N}),\eta)$.
\end{thm}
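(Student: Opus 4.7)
The plan is to reduce Theorem~\ref{thm1} to Theorem~\ref{thM:firstidwef} by showing that, in the presence of Axiom~Beta, the two versions of well-founded extensions coincide, i.e.\ that $\mathrm{wfExt}(\mathcal{N}) = \mathrm{wfExt}_{\ast}(\mathcal{N})$. Once this equality is in hand, the $\utilde{\Sigma}_n^{L_\alpha}$-definable isomorphism $j:(L_\alpha, \in) \to (\mathrm{wfExt}_\ast(\mathcal{N}),\eta)$ already delivered by Theorem~\ref{thM:firstidwef} becomes the desired isomorphism $(L_\alpha,\in)\cong (\mathrm{wfExt}(\mathcal{N}),\eta)$.

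Theorem~\ref{thM:firstidwef} already establishes that $\mathcal{N}$ is a model of ${\tt \Sigma^1_1\mbox{-}LB}+{\tt GC}$, and supplies the characterizations in equations~(\ref{whasdsafsafdsafds}) and~(\ref{whasdsafsafdsafds123423}): membership in $\mathrm{wfExt}(\mathcal{N})$ requires only $\utilde{\Delta}_n^{L_\alpha}$-well-foundedness of $(\mathrm{trcl}_\eta(x)\cup \{x\},\eta)$ together with inclusion in $\mathrm{rng}(\partial)$, while $\mathrm{wfExt}_\ast(\mathcal{N})$ requires genuine well-foundedness. The inclusion $\mathrm{wfExt}_\ast(\mathcal{N})\subseteq \mathrm{wfExt}(\mathcal{N})$ is immediate, so everything turns on the reverse inclusion.

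To prove the reverse inclusion, fix $x\in \mathrm{wfExt}(\mathcal{N})$ and set $X=\mathrm{trcl}_\eta(x)\cup \{x\}$. The argument establishing equation~(\ref{help2}) in the proof of Theorem~\ref{thM:firstidwef} already places $X$ in $L_\alpha$, and Proposition~\ref{iamaveryhelpfulprop} together with $\Delta_n$-separation in $L_\alpha$ produces $R:=\eta\cap (X\times X)$ as an element of $L_\alpha$. Since every $\utilde{\Delta}_n^{L_\alpha}$-definable subset of $X\in L_\alpha$ lies in $L_\alpha$ by $\Delta_n$-separation, the hypothesis of $\utilde{\Delta}_n^{L_\alpha}$-well-foundedness of $(X,R)$ coincides with the statement that $L_\alpha$ models $(X,R)$ to be well-founded. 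We may therefore invoke Axiom~Beta inside $L_\alpha$ on the pair $(X,R)$ to obtain a function $\pi\in L_\alpha$ with $\mathrm{dom}(\pi)=X$ and $\pi(y)=\{\pi(y'): y'\in X \;\&\; y'Ry\}$ for each $y\in X$.

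The function $\pi$ now bridges internal and external well-foundedness. Any meta-theoretic descending $\eta$-chain $\ldots\eta y_2\eta y_1\eta y_0$ in $X$ would push forward under $\pi$ to an honest descending $\in$-chain $\ldots\in \pi(y_2)\in \pi(y_1)\in \pi(y_0)$ in $V$, contradicting the foundation axiom in the meta-theory. Hence $(X,\eta)$ is genuinely well-founded, $x\in \mathrm{wfExt}_\ast(\mathcal{N})$, and the proof is complete. The main obstacle is precisely this substantive step of transferring internal well-foundedness to external well-foundedness via $\pi$, which is exactly what Axiom~Beta purchases; the surrounding verifications that $X,R\in L_\alpha$ and that the $L_\alpha$-internal notion of well-foundedness aligns with $\utilde{\Delta}_n^{L_\alpha}$-well-foundedness are bookkeeping resting on $\Delta_n$-separation and Proposition~\ref{iamaveryhelpfulprop}.
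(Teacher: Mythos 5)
Your proposal is correct and takes essentially the same route as the paper: reduce to the First Identification Theorem, show $\mathrm{wfExt}(\mathcal{N})\subseteq \mathrm{wfExt}_{\ast}(\mathcal{N})$ by placing $X=\mathrm{trcl}_{\eta}(x)\cup\{x\}$ and the restricted $\eta$-relation in $L_{\alpha}$ via equation~(\ref{help2}) and Proposition~\ref{iamaveryhelpfulprop}, and then use Axiom~Beta to obtain a collapsing map $\pi\in L_{\alpha}$ that converts any external descending $\eta$-chain into a descending $\in$-chain, contradicting foundation in the metatheory. The only cosmetic difference is that the paper invokes the Mostowski Collapse Theorem (and so checks that $(X,E_X)$ is extensional) whereas you apply Axiom~Beta directly, which does not require extensionality.
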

\begin{proof}
By the previous theorem, it suffices to show that~$\mathrm{wfExt}(\mathcal{N})\subseteq \mathrm{wfExt}_{\ast}(\mathcal{N})$. For this, it suffices to show that for all~$x\in \rho$ we have
\begin{align}
  [(\mathrm{trcl}_{\eta}(x) \cup \{x\}, \eta) &  \mbox{ is~$\utilde{\Delta}^{L_{\alpha}}_n$-well-founded} \; \& \; (\mathrm{trcl}_{\eta}(x) \cup \{x\}) \subseteq \mathrm{rng}(\partial)] \notag\\
& \Longrightarrow    (\mathrm{trcl}_{\eta}(x) \cup \{x\}, \eta) \mbox{ is well-founded} \label{eqn:13412341234}
\end{align} 
So suppose that~$x\in \rho$ satisfies the hypothesis of this conditional. Then define the set~$X=(\mathrm{trcl}_{\eta}(x) \cup \{x\})$, which is in~$L_{\alpha}$ by (\ref{help2}) of the previous proof. Then by the proposition on the existence of restricted~$\eta$-relation (Proposition~\ref{iamaveryhelpfulprop}), choose binary relation~$E_X\in L_{\alpha}$ such that~$E_X\subseteq \rho\times X$ and such that~$Xa$ implies~$E_X(b,a)$ iff~$b\eta a$. Since~$X$ is~$\eta$-transitive, we have that~$E_X\subseteq X\times X$. Then the hypothesis that~$(\mathrm{trcl}_{\eta}(x) \cup \{x\}, \eta)$ is~$\utilde{\Delta}^{L_{\alpha}}_n$-well-founded and the~$\eta$-transitivity of~$X$ implies
\begin{equation}
L_{\alpha} \models (X,E_X) \mbox{ is well-founded and extensional}
\end{equation}
Since the structure~$L_{\alpha}$ satisfies~$\mathrm{Axiom\mbox{\;}Beta}$, the structure~$L_{\alpha}$ satisfies the Mostowski Collapse Theorem (cf. discussion following Definition~\ref{defna:axiombeta}). Then there is a transitive set~$M$ in~$L_{\alpha}$ and a map~$\pi$ in~$L_{\alpha}$ such that~$\pi:(X,E_X)\rightarrow (M,\in)$ is an isomorphism. Suppose that~$(X,E_X)$ is not well-founded. Then there is an infinite decreasing~$\eta$-sequence~$x_i$ in~$X\subseteq L_{\alpha}$. Then~$\pi(x_i)$ is an infinite decreasing~$\in$-sequence.
\end{proof}

This allows us to now establish the Main Theorem~\ref{thm:main}:
\begin{proof} (of Theorem~\ref{thm:main}): By compactness, this follows from the Existence Theorem~\ref{thm:basicexistence} and the Second Identification of the Well-Founded Sets Theorem~\ref{thm1}. Here we're also appealing to the connection between the union of the axiomatic characterizations of~$\Sigma_n$-admissibility and~${\tt ZFC\mbox{-}P}$, which we noted immediately after the definition of~$\Sigma_n$-admissibility (cf. Definition~\ref{defn:nadmissible}). 
\end{proof}

\subsection*{Acknowledgements}
I was lucky enough to be able to present parts of this work at a number of workshops and conferences, and I would like to thank the participants and organizers of these events for these opportunities. I would like to especially thank the following people for the comments and feedback which I received on these and other occasions: Robert~Black, Roy~Cook, Matthew~Davidson, Walter~Dean, Marie~Du\v{z}\'{\i}, Kenny~Easwaran, Fernando~Ferreira, Martin~Fischer, Rohan~French, Salvatore~Florio, Kentaro~Fujimoto, Jeremy~Heis, Joel~David~Hamkins, Volker~Halbach, Ole Thomassen~Hjortland, Luca~Incurvati, Daniel~Isaacson, J\"onne~Kriener, Graham~Leach-Krouse, Hannes~Leitgeb, {\O}ystein~Linnebo, Paolo~Mancosu, Richard~Mendelsohn, Tony~Martin, Yiannis~Moschovakis, John~Mumma, Pavel~Pudl\'ak, Sam~Roberts, Marcus~Rossberg, Tony~Roy, Gil~Sagi, Florian~Steinberger, Iulian~Toader, Gabriel~Uzquiano, Albert~Visser, Kai~Wehmeier, Philip~Welch, Trevor~Wilson, and Martin~Zeman. This paper has likewise been substantially bettered by the feedback and comments of the editors and referees of this journal, to whom I express my gratitude. While composing this paper, I was supported by a Kurt G\"odel Society Research Prize Fellowship and by {\O}ystein Linnebo's European Research Council funded project ``Plurals, Predicates, and Paradox.'' 

\bibliographystyle{plain}
\bibliography{mybib.v011}

\end{document}